\documentclass[a4paper]{amsart}
\usepackage{amssymb, enumitem}
\usepackage{hyperref, aliascnt}

 \usepackage{showlabels}

\usepackage{tikz}
\usetikzlibrary{calc}

\newcommand{\andSep}{\,\,\,\text{ and }\,\,\,}

\newcommand{\RR}{{\mathbb{R}}}
\newcommand{\ca}{$C^*$-algebra}

\newcommand{\calZ}{\mathcal{Z}}

\newcommand{\spec}{\sigma}
\newcommand{\charFct}{\mathbf{1}}
\newcommand{\axiomO}[1]{(O#1)}
\newcommand{\CatCu}{\ensuremath{\mathrm{Cu}}}
\newcommand{\CuSgp}{$\CatCu$-sem\-i\-group}

\DeclareMathOperator{\QT}{QT}
\DeclareMathOperator{\T}{T}
\DeclareMathOperator{\F}{F}
\DeclareMathOperator{\LL}{L}
\DeclareMathOperator{\Lsc}{Lsc}

\DeclareMathOperator{\Cu}{Cu}

\def\today{\number\day\space\ifcase\month\or   January\or February\or
   March\or April\or May\or June\or   July\or August\or September\or
   October\or November\or December\fi\   \number\year}

\newtheorem{lma}{Lemma}[section]

\newaliascnt{thmCt}{lma}
\newtheorem{thm}[thmCt]{Theorem}
\aliascntresetthe{thmCt}

\newaliascnt{corCt}{lma}
\newtheorem{cor}[corCt]{Corollary}
\aliascntresetthe{corCt}

\newaliascnt{prpCt}{lma}
\newtheorem{prp}[prpCt]{Proposition}
\aliascntresetthe{prpCt}

\theoremstyle{definition}

\newaliascnt{dfnCt}{lma}

\aliascntresetthe{dfnCt}

\newaliascnt{rmkCt}{lma}
\newtheorem{rmk}[rmkCt]{Remark}
\aliascntresetthe{rmkCt}

\newaliascnt{rmksCt}{lma}

\aliascntresetthe{rmksCt}

\newaliascnt{exaCt}{lma}

\aliascntresetthe{exaCt}

\newaliascnt{qstCt}{lma}

\aliascntresetthe{qstCt}

\newaliascnt{pgrCt}{lma}

\aliascntresetthe{pgrCt}

\newcounter{theoremintro}

\newaliascnt{thmIntroCt}{theoremintro}
\newtheorem{thmIntro}[thmIntroCt]{Theorem}
\aliascntresetthe{thmIntroCt}

\newaliascnt{dfnIntroCt}{theoremintro}

\aliascntresetthe{dfnIntroCt}

\newaliascnt{prpIntroCt}{theoremintro}

\aliascntresetthe{prpIntroCt}

\newaliascnt{corIntroCt}{theoremintro}
\newtheorem{corIntro}[corIntroCt]{Corollary}
\aliascntresetthe{corIntroCt}

\newaliascnt{qstIntroCt}{theoremintro}

\aliascntresetthe{qstIntroCt}

\newaliascnt{pbmIntroCt}{theoremintro}

\aliascntresetthe{pbmIntroCt}

\DeclareMathOperator{\Her}{Her}

\DeclareMathOperator{\Aff}{Aff}
\DeclareMathOperator{\Ped}{Ped}

\newcommand{\Cpct}{\mathcal{K}}

\hypersetup{hidelinks,pdftitle={Ranks of operators and pureness of C*-algebras},pdfauthor={Hannes Thiel}}
\title{Ranks of operators and pureness of $C^*$-algebras}
\author{Hannes Thiel}
\address{Hannes~Thiel,
Department of Mathematical Sciences, Chalmers University of Technology and University of
Gothenburg, Gothenburg SE-412 96, Sweden.}
\email{hannes.thiel@chalmers.se}
\urladdr{www.hannesthiel.org}
\thanks{The author was partially supported by the Knut and Alice Wallenberg Foundation (KAW 2021.0140) and by the Swedish Research Council project grant 2024-04200.}
\subjclass[2020]{Primary 46L05, 46L35.}
\keywords{\ca{s}, ranks of operators, Cuntz semigroups, pureness, strict comparison, Global Glimm Property}
\date{\today}

\begin{document}
\begin{abstract}
For simple, nonelementary \ca{s}, we show uniform density of functions realized as ranks of operators.
%We show that every strictly positive, continuous, affine function on the tracial state space of a simple, nonelementary \ca{} can be uniformly approximated by ranks of operators.
%This provides an approximate solution to the rank problem in this setting.
It follows that strict comparison by traces forces almost divisibility, and hence pureness.
As an application, we show that strict comparison implies $\mathcal{Z}$-stability for separable, simple, nonelementary \ca{s} with locally finite nuclear dimension.
In particular, the Toms--Winter conjecture holds for approximately subhomogeneous \ca{s}.

More generally, we obtain uniform density of ranks for \ca{s} with the Global Glimm Property.
Consequently, an exact \ca{} is pure if and only if it has strict comparison together with the Global Glimm Property.
\end{abstract}
\maketitle

%==========================================================================================
%==========================================================================================
\section{Introduction}
\label{sec:introduction}

%==========================================================================================
Let $A$ be a \ca{}.
The rank of a positive element $a\in A\otimes\mathcal{K}$ at a lower semicontinuous trace $\tau$ is
\[
d_\tau(a)=\lim_{n\to\infty}\tau(a^{1/n}).
\]
The resulting function $\tau\mapsto d_\tau(a)$ is lower semicontinuous, additive, and homogeneous on the trace cone.
The rank problem asks which functions can be obtained in this way.
It is usually formulated using $2$-quasitraces, which agree with traces for exact algebras.
We consider uniform approximation by ranks over ordinary traces.

Our first main result is the following.

%==========================================================================================
\begin{thmIntro}[\autoref{prp:RanksSimpleUnital}]
\label{intro:simple-ranks}
Let $A$ be a unital, simple, nonelementary \ca{}.
Assume that $T_1(A)\neq\emptyset$, and let $f\in\Aff(T_1(A))_{++}$ and $\varepsilon>0$.
Then there exist $m\geq1$ and $e\in M_m(A)_+$ such that
\[
\bigl| f(\tau)-d_\tau(e) \bigr| \leq \varepsilon
\]
for every $\tau \in T_1(A)$.
\end{thmIntro}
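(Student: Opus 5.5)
We construct $e$ directly from elements with small, approximately constant rank, combined with continuous functional calculus on a "tracial partition" of $T_1(A)$. The key input from simplicity and nonelementarity is this. For every $n\geq1$ there is a nonzero positive element $b\in A$ such that $n$ pairwise orthogonal, mutually Cuntz-equivalent copies of $b$ sit inside $A$. Hence
\[
d_\tau(b)\leq 1/n \quad\text{for all } \tau\in T_1(A).
\]
This is the standard Glimm-halving consequence of nonelementary simple algebras (embedding of $C_0((0,1],M_n)$-type cones). We do not get a lower bound on $d_\tau(b)$ for free. This is the main obstacle: with no comparison available, we must build elements whose ranks are pinned down from both sides.

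\textbf{Step 1: reduce to a partition of unity.} Since $f$ is continuous and affine on the compact convex set $T_1(A)$, and $A$ is unital, evaluation gives a dense subspace of $\Aff(T_1(A))$:
\[
\{\tau\mapsto\tau(h) : h\in A_{sa}\}
\]
So we may choose $h\in A_+$ with $\|f-\widehat h\|<\varepsilon/3$, where $\widehat h(\tau)=\tau(h)$. Rescaling so that $\|h\|\leq 1$ (after passing to $M_m(A)$ and splitting $f$ into a bounded number of pieces each at most $1$), we approximate $\tau(h)$ by ranks. Choose a large $N$ and the step functions
\[
g_k=\charFct_{(k/N,1]},\qquad k=0,\dots,N-1.
\]
Then $\tau(h)\approx\frac1N\sum_k d_\tau\bigl((h-k/N)_+\bigr)$ with error at most $1/N$. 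Indeed, $d_\tau((h-t)_+)$ is the spectral measure of $(t,1]$, and $\frac1N\sum_k\mu_\tau(k/N,1]$ is a Riemann sum for $\int t\,d\mu_\tau$.

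\textbf{Step 2: realize the factor $1/N$ by a rank.} We need an element whose rank equals $1/N$ times that of $(h-k/N)_+$, uniformly in $\tau$. Take a unital-ish matrix-cone embedding: by nonelementarity and simplicity, find $x\in A_+$ of norm one together with $N$ orthogonal equivalent positive elements $x_1,\dots,x_N\leq x$ whose sum is very large in $x$. We want
\[
d_\tau(x_1)\approx \tfrac1N\, d_\tau(\mathrm{supp})\quad\text{uniformly.}
\]
The cleaner route is to tensor. The element
\[
e=\bigoplus_k (h-k/N)_+\otimes c\in M_N(A)\otimes A \subset M_{m}(A\otimes A)
\]
is not allowed, so instead we use the Glimm property in the form of an approximately central order-zero map $\phi\colon M_N\to A$ with $1-\phi(1)$ tracially small. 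Such a map exists by nonelementary simplicity; we expect it is established earlier in the paper, or it follows via Kirchberg–Rørdam techniques. Set
\[
e=\bigoplus_k \phi(e_{11})^{1/2}(h-k/N)_+\phi(e_{11})^{1/2}.
\]
Approximate centrality together with $d_\tau(\phi(e_{11}))\approx 1/N$ yields
\[
d_\tau(e)\approx\frac1N\sum_k d_\tau\bigl((h-k/N)_+\bigr)\approx\tau(h),
\]
with uniform error.

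\textbf{Main obstacle.} The hard step is making the ranks in Step 2 uniformly close, rather than only lower-bounded. Ranks are lower semicontinuous, so $d_\tau$ of a product-like element is controlled from below by approximations but from above only through supports. I would handle this by replacing each $(h-k/N)_+$ with a cut-down $g_\delta(h)$ and using trace-norm approximate centrality to compare $\tau$ of continuous functional calculus. The resulting $1/N$ and $\delta$ errors are absorbed into $\varepsilon$.
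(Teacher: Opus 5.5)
There is a genuine gap in Step 2. You assume approximately central c.p.c.\ order-zero maps $\phi\colon M_N\to A$ with $1-\phi(1)$ uniformly tracially small. These do not follow from simplicity and nonelementarity, and the paper establishes nothing of the kind. This is a regularity property of tracial $\mathcal{Z}$-stability or uniform property $\Gamma$ type, and it fails in the generality of the theorem.

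For example, take $C^*_r(F_2)$, which is unital, simple, nonelementary, with a unique trace. Maps approximately commuting with a dense subset and with $\tau(1-\phi(1))\to0$ would induce a unital $*$-homomorphism $M_N\to L(F_2)^\omega\cap L(F_2)'$. This contradicts the fact that $L(F_2)$ lacks property $\Gamma$.

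More fundamentally, Step 2 tries to produce, uniformly in $\tau$, an element with rank $\frac1N d_\tau(q)$ for a given $q$. That is essentially almost divisibility of ranks, which is what the theorem is meant to deliver, so the reduction is close to circular. Even granting $\phi$, you give no upper bound for $d_\tau(\phi(e_{11})^{1/2}q\,\phi(e_{11})^{1/2})$ by $\frac1N d_\tau(q)$ plus a small error. Approximate centrality controls $\tau$ of continuous functional calculus, but $d_\tau$ is only lower semicontinuous and can jump up under small perturbations.

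The missing idea is to do the division at the level of traces, which are linear and can be transported exactly, rather than at the level of ranks. You already have the right ingredient: $b$ with $d_\tau(b)\le 1/n$, which is full since $A$ is simple.

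Fullness and unitality give $y_1,\dots,y_N$ with $\sum_j y_j^*by_j=1$. Then $c:=\sum_j b^{1/2}y_jhy_j^*b^{1/2}\in\Her(b)_+$ satisfies $\tau(c)=\tau(h)$ for all $\tau\in T_1(A)$, while $d_\tau(c)\le d_\tau(b)\le1/n$.

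Now run your own Step 1 estimate with unit step size instead of mesh $1/N$. For an integer $m\ge\max\{1,\|c\|\}$, the element $e:=\bigoplus_{j=1}^m(c-j)_+$ satisfies $d_\tau(e)\le\tau(c)\le d_\tau(e)+d_\tau(c)$. This holds because the step-function error is bounded by the spectral measure of $(0,\infty)$, that is, by the rank of $c$.

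Your Step 1 bound already reflects this: the error there is really $\frac1N d_\tau(h)$, proportional to the rank. So the error is made small by shrinking the rank of the element while keeping its trace, which costs nothing. You do not need to shrink the mesh, which would require dividing ranks. This is exactly the paper's proof, via \autoref{prp:CompressUnital} and \autoref{prp:Rounding}.
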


%==========================================================================================
Here $\Aff(K)_{++}$ denotes the strictly positive, continuous, affine functions on $K$, and matrix traces are unnormalized.
There are no assumptions of separability, exactness, stable rank one, or regularity of the extreme boundary.

Uniform rank approximation for simple unital nonelementary AH algebras is contained in the argument of Brown, Perera, and Toms in \cite[Section~5]{BroPerTom08CuElliottConj}; the relevant ingredients are Proposition~5.1 and Lemma~5.2 there.
Toms used this approximation to deduce almost divisibility from strict comparison and obtain $\mathcal{Z}$-stability for simple unital AH algebras; see \cite[Proposition~2.1 and Corollary~2.2]{Tom11CharClassifiableAH}.
\autoref{intro:simple-ranks} also answers the approximate rank question for approximately subhomogeneous algebras recorded in \cite[Problem~XXIV]{SchTikWhi25NuclCAlgs99Problems}.

Exact rank realization has been studied under additional regularity hypotheses.
In the presence of strict comparison, Dadarlat and Toms obtained exact realization results under additional hypotheses on the trace boundary or on the ranks already available in the algebra; see \cite[Theorem~1.1]{DadTom10Ranks}.
For separable, simple, unital, nonelementary algebras of stable rank one, exact realization over quasitraces was proved in \cite[Theorem~8.11]{Thi20RksOps}.
Further realization theorems for algebras of stable rank one, including nonsimple algebras, were obtained in \cite[Theorems~7.13 and~7.14]{AntPerRobThi22CuntzSR1}.
The present approximation theorem requires no stable rank hypothesis, but concerns ordinary traces.

\medskip

The rank approximation theorem supplies almost divisibility when combined with strict comparison.
Recall that a \ca{} is pure if its Cuntz semigroup is almost unperforated and almost divisible.
We distinguish strict comparison by ordinary traces from strict comparison by $2$-quasitraces; the latter is the usual unqualified comparison property.

%==========================================================================================
\begin{corIntro}[\autoref{prp:PureSimpleUnital}]
\label{intro:simple-pure}
Every unital, simple, nonelementary \ca{} with strict comparison by traces is pure.
\end{corIntro}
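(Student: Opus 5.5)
Since pureness means that $\Cu(A)$ is almost unperforated and almost divisible, and strict comparison by traces already gives almost unperforation, the task is to derive almost divisibility from \autoref{intro:simple-ranks}. Almost unperforation holds because comparison by traces implies comparison by functionals on $\Cu(A)$: a trace-level inequality $(n+1)x\leq ny$ forces $\widehat{x}<\widehat{y}$ wherever $\widehat{y}$ is finite and nonzero, and simplicity handles the degenerate cases. If $T_1(A)=\emptyset$, then $A$ is traceless. Strict comparison by traces then makes every nonzero element properly infinite, so $\Cu(A)=\{0,\infty\}$, which is trivially almost divisible. From now on we assume $T_1(A)\neq\emptyset$.

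For almost divisibility, fix $x'\ll x$ in $\Cu(A)$ and $k\geq 1$; we must find $y$ with $ky\leq x$ and $x'\leq (k+1)y$. We may take $x'\neq 0$. First choose $x''$ with $x'\ll x''\ll x$, realized by $(a-\delta)_+$ for some $a$ representing $x$. The goal is to build an affine continuous function $g$ on $T_1(A)$ satisfying two inequalities:
\[
k g < \widehat{x}
\andSep
\widehat{x''} < (k+1) g
\]
on $T_1(A)$. The rank function $\widehat{x}$ is lower semicontinuous and affine, and $\widehat{x''}\leq \widehat{x}$. Because $x''\ll x$ and $A$ is simple and unital, we expect a strict gap: after passing to a slightly larger compactly contained element, one has $\widehat{x''}(\tau)+\eta\leq \widehat{x}(\tau)$ for some $\eta>0$. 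This uses compactness of $T_1(A)$ together with the fact that the rank function of an element $(a-\delta)_+$ lies below the continuous function $\tau\mapsto\tau(f_\delta(a))$.

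The standard Edwards-type separation (Dini argument) between the upper-semicontinuous and lower-semicontinuous affine functions then yields $h\in\Aff(T_1(A))_{++}$ with $\widehat{x''}<h<\widehat{x}$, with uniform gaps. Put $g=h/(k+1/2)$. Apply \autoref{intro:simple-ranks} to $g$ with $\varepsilon$ smaller than the gaps divided by $k+1$, using that $g$ and the gaps are bounded below on the compact set $T_1(A)$. This produces $e\in M_m(A)_+$ with $|d_\tau(e)-g(\tau)|\leq\varepsilon$. Set $y=[e]$. Then $k\widehat{y}<\widehat{x}$ and $\widehat{x'}\leq\widehat{x''}<(k+1)\widehat{y}$ strictly on $T_1(A)$.

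Strict comparison by traces then gives $ky\leq x$ and $x'\leq(k+1)y$; when $x=\infty$, or on unbounded traces, simplicity allows the usual reduction. The main obstacle is securing the uniform strict gaps: rank functions are only lower semicontinuous, so the passage through $x'\ll x''\ll x$ and the continuous functions $\tau(f_\delta(a))$ is needed to get continuous affine functions squeezed between them. There is also some care needed to make sure comparison by traces in the unital setting only requires checking normalized traces, which follows since all densely finite traces are multiples of elements of $T_1(A)$ by simplicity.
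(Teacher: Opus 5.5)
Your outline follows the paper's proof in its main lines: the traceless case gives $\Cu(A)\cong\{0,\infty\}$, and otherwise a continuous affine function squeezed between ranks is divided by $k+\tfrac12$, approximated via \autoref{intro:simple-ranks}, and strict comparison by traces finishes. However, the step you build $h$ on is false. In general there is no $\eta>0$ with $\widehat{x''}+\eta\leq\widehat{x}$ on $T_1(A)$, whatever $x''$ you pick. Take $x'=x=[1_A]$, which is compact in $\Cu(A)$. Then $x'\ll x''\ll x$ forces $x''=x$, so $\widehat{x''}=\widehat{x}\equiv 1$, and no $h$ with $\widehat{x''}<h<\widehat{x}$ exists. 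The same happens for every projection class, so your Edwards separation ``with uniform gaps'' cannot be carried out exactly when $x$ is compact. Edwards' theorem would in any case need an upper semicontinuous minorant, while $\widehat{x''}$ is only lower semicontinuous.

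The missing observation is that no gap between $\widehat{x'}$ and $\widehat{x}$ is needed. The slack comes from the factor $k+\tfrac12$ together with $h$ being bounded below.
\begin{itemize}
\item Choose $a\in M_l(A)_+$ with $x'\ll[a]\leq x$; this is also the reduction that handles $x=\infty$.
\item Choose $\delta>0$ with $x'\leq[(a-\delta)_+]$.
\item Set $h(\tau):=\tau(f_\delta(a))$, where $f_\delta\colon[0,\infty)\to[0,1]$ is continuous, vanishes at $0$, and equals $1$ on $[\delta,\infty)$.
\end{itemize}
This $h$ is already continuous, affine and strictly positive, since traces on simple $A$ are faithful. It satisfies $d_\tau((a-\delta)_+)\leq h(\tau)\leq d_\tau(a)$, so no separation theorem is needed. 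Let $\mu:=\min_{T_1(A)}h>0$ and suppose $|d_\tau(e)-h(\tau)/(k+\tfrac12)|\leq\varepsilon$. Using $\varepsilon\leq\frac{\varepsilon}{\mu}h$, one gets
\[
k\,d_\tau(e)\leq\Bigl(\tfrac{k}{k+1/2}+\tfrac{k\varepsilon}{\mu}\Bigr)d_\tau(a),
\qquad
d_\tau((a-\delta)_+)\leq\Bigl(\tfrac{k+1}{k+1/2}-\tfrac{(k+1)\varepsilon}{\mu}\Bigr)^{-1}(k+1)\,d_\tau(e).
\]
Both constants are $<1$ once $\varepsilon<\mu/((k+1)(2k+1))$. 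This gives the uniform multiplicative $\gamma<1$ that the definition of strict comparison by traces asks for. The inequalities extend to all lower semicontinuous traces, because on a unital simple algebra these are the multiples of tracial states and the trace that is infinite on every nonzero positive element. This repaired version is exactly the paper's proof.
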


%==========================================================================================
For exact algebras, strict comparison by traces agrees with strict comparison.
Combining \autoref{intro:simple-pure} with Winter's $\calZ$-stability theorem \cite{Win12NuclDimZstable}, R{\o}rdam's pureness theorem \cite{Ror04StableRealRankZ}, and the nuclear dimension theorem of Castillejos--Evington--Tikuisis--White--Winter \cite{CasEviTikWhiWin21NucDimSimple}, gives the following application:

%==========================================================================================
\begin{thmIntro}[\autoref{prp:TomsWinter}, \autoref{rmk:ASH}]
\label{intro:toms-winter}
Let $A$ be a separable, unital, simple, nonelementary \ca{} with locally finite nuclear dimension.
Then strict comparison, pureness, $\mathcal{Z}$-stability, and finite nuclear dimension are equivalent.
In this case, $\dim_{\mathrm{nuc}}(A)\leq1$.

In particular, the Toms--Winter conjecture holds for separable, simple, nonelementary, approximately subhomogeneous \ca{s}.
\end{thmIntro}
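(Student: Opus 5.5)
The proof will be a chain of implications. The only new ingredient is \autoref{intro:simple-pure}; the rest comes from the three cited theorems. Let $A$ be separable, unital, simple, nonelementary, with locally finite nuclear dimension. Then $A$ is nuclear: it is locally approximated by subalgebras of finite nuclear dimension, and these are nuclear. In particular $A$ is exact, so by Haagerup's theorem every $2$-quasitrace on $A$ is a trace. Hence strict comparison and strict comparison by traces coincide for $A$, and I may pass freely between them. I will prove the cycle
\[
\text{strict comparison}\Rightarrow\text{pure}\Rightarrow\calZ\text{-stable}\Rightarrow\dim_{\mathrm{nuc}}(A)\leq 1\Rightarrow\text{finite nuclear dimension}\Rightarrow\text{strict comparison}.
\]

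\emph{Strict comparison $\Rightarrow$ pure.} This is exactly \autoref{intro:simple-pure}. One point needs care: \autoref{intro:simple-ranks} requires $T_1(A)\neq\emptyset$. If $A$ has no tracial states, then strict comparison says that $a\precsim b$ whenever $b\neq 0$. So $A$ is purely infinite, and its Cuntz semigroup is $\{0,\infty\}$, which is trivially almost unperforated and almost divisible. I expect \autoref{prp:PureSimpleUnital} already covers this degenerate case, but I would check.

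\emph{Pure $\Rightarrow$ $\calZ$-stable.} This is Winter's theorem \cite{Win12NuclDimZstable}. That theorem assumes locally finite nuclear dimension, or in its original form finite decomposition rank, together with separability, unitality, simplicity and nonelementarity. I expect this step to be the main place where hypotheses must be matched exactly. I would verify that the version in \cite{Win12NuclDimZstable} takes pureness, meaning almost unperforated and almost divisible Cuntz semigroup, as its input. I would also verify that it allows \emph{locally} finite nuclear dimension rather than finite nuclear dimension. I recall that both hold.

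\emph{Remaining implications.} For $\calZ$-stable $\Rightarrow$ $\dim_{\mathrm{nuc}}\leq1$: $A$ is nuclear, as shown above, so this is the Castillejos--Evington--Tikuisis--White--Winter theorem \cite{CasEviTikWhiWin21NucDimSimple}. Trivially $\dim_{\mathrm{nuc}}(A)\leq 1$ gives finite nuclear dimension. For finite nuclear dimension $\Rightarrow$ strict comparison, I would use the converse direction: for simple nonelementary algebras, finite nuclear dimension implies $\calZ$-stability (Winter), and R{\o}rdam's theorem \cite{Ror04StableRealRankZ} shows that $\calZ$-stable algebras have almost unperforated Cuntz semigroup. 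Almost unperforation is equivalent to strict comparison by quasitraces, and these are traces here by exactness. This also shows that pureness implies strict comparison directly, so all four conditions are equivalent and each forces $\dim_{\mathrm{nuc}}(A)\leq 1$.

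\emph{The ASH statement.} Subhomogeneous algebras have finite nuclear dimension. Hence an approximately subhomogeneous algebra, being an inductive limit of these, has locally finite nuclear dimension, and the first part applies. The unitality assumption must then be removed. For a nonunital separable simple $A$, I would pass to a unital corner $pAp$ if a nonzero projection exists, using that all four properties are invariant under stable isomorphism. If no projection exists, I would appeal to the nonunital versions of the cited theorems together with the nonunital version of the pureness result that \autoref{rmk:ASH} presumably supplies.
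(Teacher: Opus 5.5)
Your proof of the unital equivalence is the paper's proof of \autoref{prp:TomsWinter}, with the same ingredients:
\begin{itemize}
\item local finite nuclear dimension gives exactness, so strict comparison becomes strict comparison by traces;
\item \autoref{prp:PureSimpleUnital} gives pureness (its proof does treat $T_1(A)=\emptyset$);
\item Winter gives $\calZ$-stability from pureness or from finite nuclear dimension;
\item Castillejos--Evington--Tikuisis--White--Winter give $\dim_{\mathrm{nuc}}(A)\leq 1$;
\item R{\o}rdam closes the loop.
\end{itemize}

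The gap is in the ASH step. Your claim that subhomogeneous algebras have finite nuclear dimension is false. Take $X$ to be the Hilbert cube: $C(X)$ is homogeneous, but $\dim_{\mathrm{nuc}}C(X)=\dim X=\infty$, and the same holds for $M_n(C(X))$. The building blocks of an ASH algebra, and their images in the limit (which are quotients of them), may have infinite-dimensional spectrum. So ``inductive limit of subhomogeneous algebras'' does not by itself give locally finite nuclear dimension. The missing input is the Ng--Winter theorem that separable ASH algebras have locally finite decomposition rank, because subhomogeneous algebras are locally approximated by subhomogeneous algebras with finite-dimensional spectrum. This gives locally finite nuclear dimension, and it is exactly what \autoref{rmk:ASH} cites. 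With it, the first part of your argument applies.

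On unitality: \autoref{rmk:ASH} only treats separable, \emph{unital}, simple, nonelementary ASH algebras. The paper thus reads the ``in particular'' inside the unital setting of the first sentence, and nothing further needs to be removed to match it. Your nonunital sketch goes beyond the paper, and its projectionless branch is not an argument. \autoref{rmk:ASH} supplies no nonunital pureness result. Nonunital pureness would instead follow from \autoref{prp:GGPStrComp-Pure}, since simple nonelementary algebras have the Global Glimm Property. Even then you would need nonunital versions of the cited $\calZ$-stability and nuclear-dimension theorems under locally finite nuclear dimension, which you only gesture at. Either restrict to the unital case, or supply those results.
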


%==========================================================================================
The general results are based on the Global Glimm Property, as studied in
\cite{ThiVil23Glimm}.
We first consider the rank problem abstractly for \CuSgp{s}.
Given a \CuSgp{} $S$ and a compact subset $K\subseteq\F(S)$, we show that
density of ranks on $K$ is equivalent to almost divisibility of ranks on $K$;
see \autoref{prp:CharDenseK}.
We also give a sufficient criterion for these conditions in terms of a weak form
of $(2,\omega)$-divisibility together with a smoothing property.

We then specialize these results to Cuntz semigroups of \ca{s}.
For every \ca{} $A$, we show that $\Cu(A)$ satisfies the required smoothing
property with respect to the cone $\T(A)$ of lower semicontinuous traces.
As a consequence, density of ranks on traces, almost divisibility of ranks on
traces, and the tracial Global Glimm Property are equivalent;
see \autoref{prp:CharDenseTA}.
In particular, the Global Glimm Property implies that ranks are dense and almost
divisible on traces.

%==========================================================================================
\begin{thmIntro}[\autoref{prp:GGP}]
Let $A$ be a \ca{} with the Global Glimm Property.
Then ranks are dense and almost divisible on traces.
\end{thmIntro}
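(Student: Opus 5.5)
The plan is to go through the characterization of density of ranks on traces described just above, namely \autoref{prp:CharDenseTA}. That result says that for any \ca{} $A$ three conditions are equivalent: density of ranks on $\T(A)$, almost divisibility of ranks on $\T(A)$, and the tracial Global Glimm Property. So it suffices to show that the Global Glimm Property implies the tracial Global Glimm Property. The two conclusions of the theorem then follow at once from that equivalence.

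First I recall the Cuntz-semigroup form of the Global Glimm Property from \cite{ThiVil23Glimm}. Take $x' \ll x \leq \infty y$ in $\Cu(A)$ with $y$ in the ideal generated by $x$. Then there is $z$ with
\[
2z \leq y \andSep x' \leq \infty z.
\]
This is the weak $(2,\omega)$-divisibility of the ideal-full elements.

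The tracial Global Glimm Property is the condition used in \autoref{prp:CharDenseTA}. I expect it to be the weakened version in which $2z\leq y$ is only required after evaluation at traces, or up to the smoothing property that the paper establishes for $\Cu(A)$ relative to $\T(A)$. Each such weakening is implied by the genuine inequality in $\Cu(A)$. This is because every $\tau\in\T(A)$ induces a functional $\hat\tau\in\F(\Cu(A))$ via $x\mapsto d_\tau(x)$, and order inequalities in $\Cu(A)$ pass to inequalities of ranks. So the plan is to take the $z$ supplied by the Global Glimm Property and check that it witnesses the tracial property at every $\tau\in\T(A)$. Where the tracial version also involves a compact set $K$ of functionals, as in \autoref{prp:CharDenseK}, the same $z$ works uniformly, because the inequalities hold in $\Cu(A)$ itself.

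The main obstacle is matching the exact quantifiers of the tracial Global Glimm Property. In particular it may be phrased for all $x$, without the fullness hypothesis, or with "$x' \leq \infty z$" replaced by a rank condition such as $d_\tau(x')\leq \infty\cdot d_\tau(z)$, meaning that the support of $\hat{x'}$ is contained in that of $\hat z$.

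If fullness is not assumed, I would pass to the ideal $I$ generated by $x$. The Global Glimm Property passes to ideals, and $\Cu(I)$ is an ideal of $\Cu(A)$. Applying the property inside $I$ with $y:=\infty x$ (or $y$ intersected with $I$) gives the required $z$. Traces on $A$ restrict to traces on $I$, so the tracial inequalities still hold.

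Once this is verified, \autoref{prp:CharDenseTA} yields density and almost divisibility of ranks on traces.
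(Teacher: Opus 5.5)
Your route is the paper's: the Global Glimm Property means $\Cu(A)$ is $(2,\omega)$-divisible. Evaluating the resulting inequality in $\Cu(A)$ at each $\tau\in\T(A)$ gives condition (3) of \autoref{prp:CharDenseTA}, and that theorem then yields density and almost divisibility of ranks.

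The ``main obstacle'' you raise about fullness does not actually arise, and the fix you propose for it would fail. $(2,\omega)$-divisibility applies to every $x'\ll x$ with no fullness hypothesis; in your own formulation you can simply take $y=x$. So for $a\in(A\otimes\Cpct)_+$, with $x=[a]$ and $x'=[(a-\varepsilon)_+]$, you get $b$ with $2[b]\leq[a]$ and $[(a-\varepsilon)_+]\leq\infty[b]$. Hence $2d_\tau(b)\leq d_\tau(a)$ for all $\tau\in\T(A)$, which is exactly the tracial Global Glimm Property. By contrast, choosing $y:=\infty x$ only gives $2z\leq\infty x$, which puts no bound on $d_\tau(z)$ at all, so drop that detour.
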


%==========================================================================================
Combining this with strict comparison by traces upgrades almost divisibility of
ranks to almost divisibility in the Cuntz semigroup and yields the general
pureness theorem.

%==========================================================================================
\begin{thmIntro}[\autoref{prp:GGPStrComp-Pure}, \autoref{prp:CharExactPure}]
Every \ca{} with the Global Glimm Property and strict comparison by traces is pure.
Consequently, an exact \ca{} is pure if and only if it has strict comparison and
the Global Glimm Property.
\end{thmIntro}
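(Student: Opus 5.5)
The statement has two parts. The second follows from the first, so the main work is to show that the Global Glimm Property together with strict comparison by traces forces almost unperforation and almost divisibility of $\Cu(A)$.

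\textbf{Almost unperforation.} This is the standard consequence of strict comparison, with one extra check. Suppose $(k+1)[a]\leq k[b]$. Then $d_\tau(a)\leq\frac{k}{k+1}d_\tau(b)$ for every $\tau\in\T(A)$. This gives $d_\tau(a)<d_\tau(b)$ whenever $d_\tau(b)$ is finite and nonzero. Where $d_\tau(b)=0$ we also get $d_\tau(a)=0$. Where $d_\tau(b)=\infty$ the comparison has to be read the way the paper's definition of strict comparison by traces intends, namely as comparison of $[a']$ against $[b]$ for cut-downs $a'=(a-\varepsilon)_+$, which are compactly contained in $[a]$. Applying strict comparison to each cut-down gives $[(a-\varepsilon)_+]\leq[b]$ for all $\varepsilon>0$, hence $[a]\leq[b]$.

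\textbf{Almost divisibility.} Fix $x=[a]\in\Cu(A)$, $k\geq1$, and $x'\ll x$. We must find $y$ with $ky\leq x$ and $x'\leq(k+1)y$. By the preceding theorem (\autoref{prp:GGP}), ranks are almost divisible on traces. The plan is to apply this to the rank function $\widehat{x}$ and obtain $y$ with
\[
k\,d_\tau(y)\leq d_\tau(x) \quad\text{and}\quad d_\tau(x'')\leq (k+1)\,d_\tau(y)
\]
for all $\tau$, where $x'\ll x''\ll x$ is chosen with some slack. The main obstacle is that strict comparison needs \emph{strict} inequalities between rank functions, together with a support condition $y\leq\infty\cdot x$. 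I would handle it as follows.

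\begin{enumerate}
\item First divide with respect to a slightly smaller element $x''\ll x$, so that $k\,\widehat{y}\leq\widehat{x''}$. Since $x''\ll x$, this gives $\widehat{ky}\leq\widehat{x''}<\widehat{x}$ on the traces where $\widehat{x}$ is finite and nonzero.
\item On the remaining traces, use the fact that ideals of $\Cu(A)$ correspond to traces: replace $y$ by an element dominated by $\infty x$ by cutting down into the hereditary subalgebra generated by $a$. The Global Glimm Property passes to hereditary subalgebras, so I can work inside $\overline{aAa}$.
\item Strict comparison then yields $ky\leq x$. Symmetrically, $x'$ is strictly rank-dominated by $(k+1)y$ off the zero set, which gives $x'\leq(k+1)y$.
\end{enumerate}

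The rank-approximation form of almost divisibility on traces, meaning uniform approximation on compact sets of normalized traces as in \autoref{prp:CharDenseK}, is what absorbs the $\varepsilon$-gaps. I expect this bookkeeping of strict versus non-strict inequalities, and of infinite values, to be the hard step.

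\textbf{The exact case.} For exact $A$, every $2$-quasitrace is a trace (Haagerup, Kirchberg), so strict comparison coincides with strict comparison by traces. The "if" direction is therefore the first part. For the converse, suppose $A$ is pure. Almost unperforation gives strict comparison by quasitraces, by R{\o}rdam's argument. Almost divisibility gives, for each $a$, elements $[b]$ with $k[b]\leq[a]\leq(k+1)[b]$ up to cut-downs. This is exactly the Cuntz-semigroup formulation of the Global Glimm Property from \cite{ThiVil23Glimm}, namely the existence of suitably $(k,\omega)$-divisible elements below $(a-\varepsilon)_+$, so $A$ has the Global Glimm Property.
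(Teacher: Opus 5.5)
Your overall structure matches the paper's: almost unperforation comes from strict comparison, almost divisibility comes from divisibility of ranks on traces plus strict comparison, and the exact case is handled as you say. Your exact-case argument is fine. The almost divisibility step, however, has a genuine gap.

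Strict comparison by traces, as defined here, needs a \emph{uniform} $\gamma\in(0,1)$ with $d_\tau(\cdot)\le\gamma\,d_\tau(\cdot)$ for all $\tau\in\T(A)$. Your way of producing this slack does not work. In step~1 you claim that $x''\ll x$ forces $\widehat{x''}<\widehat{x}$ wherever $\widehat{x}$ is finite and nonzero. This is false: if $x$ is the class of a projection, then $x\ll x$ and you may have $x''=x$. More generally, the spectral measure of $a$ at $\tau$ may put no mass near $0$. The strict domination of $x'$ by $(k+1)y$ in step~3 fails for the same reason. Even pointwise strict inequalities would not give a uniform $\gamma$. So neither $ky\le x$ nor $x'\le(k+1)y$ follows from what you wrote, and your closing sentence defers exactly this point.

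The missing idea is to take the slack from \emph{density} of ranks, which is what the paper does via condition~(2) of \autoref{prp:CharDenseK}.
\begin{enumerate}
\item Choose $\varepsilon\in(0,1)$ with $\frac{k}{k+1}<(1-\varepsilon)^2$. Then $\frac{1}{(1-\varepsilon)(k+1)}\widehat{x'}\ll\frac{1-\varepsilon}{k}\widehat{x}$ in $\LL(\F(\Cu(A)))$.
\item Density on $\T(A)$, available by \autoref{prp:GGP}, gives $y$ with $k\widehat{y}\le(1-\varepsilon)\widehat{x}$ and $\widehat{x'}\le(1-\varepsilon)(k+1)\widehat{y}$ on $\T(A)$.
\item Strict comparison by traces with $\gamma=1-\varepsilon$ then gives $ky\le x$ and $x'\le(k+1)y$ directly.
\end{enumerate}
If you prefer to use only almost divisibility of ranks, divide $x'\ll x$ by a large $N$. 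This gives $y_0$ with $N\widehat{y_0}\le\widehat{x}$ and $\widehat{x'}\le(N+1)\widehat{y_0}$ on $\T(A)$. Put $y:=my_0$ with $\frac{N+1}{k+1}<m<\frac{N}{k}$. The slack is then $\max\{\frac{km}{N},\frac{N+1}{m(k+1)}\}<1$.

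Your step~2 is unnecessary. For each closed ideal $I$, the trace that vanishes on $I_+$ and is $\infty$ elsewhere belongs to $\T(A)$. Hence a $\gamma$-inequality over all of $\T(A)$ already encodes ideal containment, and no passage to $\overline{aAa}$ is needed. The same observation simplifies your almost unperforation argument. The inequality $(k+1)[a]\le k[b]$ gives $d_\tau(a)\le\frac{k}{k+1}d_\tau(b)$ for every $\tau\in\T(A)$, including where $d_\tau(b)=\infty$. So strict comparison by traces applies directly, with no cut-downs.
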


%==========================================================================================
\subsection*{Acknowledgments} 

%==========================================================================================
At an initial stage of this project, I used interactive collaboration with AI to investigate the rank problem for simple ASH algebras. 
This led to the key idea contained in \autoref{prp:CompressUnital} and \autoref{prp:Rounding}. 
The extension to the setting of the Global Glimm Property was developed by the author. 
I take full responsibility for the correctness of the arguments and the exposition.

%==========================================================================================
%==========================================================================================
\section{Ranks in simple unital \ca{s}}
\label{sec:simple-unital}

%==========================================================================================
Let $A$ be a unital, simple, nonelementary \ca{}.
We show that ranks of operators in the stabilization $A\otimes\Cpct$ are uniformly dense in $\Aff(T_1(A))_{++}$;
see \autoref{prp:RanksSimpleUnital}.
We deduce that strict comparison by traces implies almost divisibility, and hence pureness;
see \autoref{prp:PureSimpleUnital}.
As an application, we verify the Toms--Winter conjecture for \ca{s} with locally finite nuclear dimension, and in particular for approximately subhomogeneous \ca{s};
see \autoref{prp:TomsWinter} and \autoref{rmk:ASH}.

\medskip

For a \ca{} $A$, we write $A_+$ for its positive cone and $T_1(A)$ for its tracial state space, equipped with the weak* topology.
We extend traces to matrix algebras using the unnormalized matrix trace, and use the same symbol for these extensions.
Given a lower semicontinuous trace $\tau\colon A_+\to[0,\infty]$ and $a\in M_m(A)_+$, the \emph{rank} of $a$ at $\tau$ is
\[
d_\tau(a):=\lim_{n\to\infty}\tau(a^{1/n}).
\]
We write $\hat{a}(\tau):=d_\tau(a)$.
For $t>0$, the cutdown $(a-t)_+$ is defined by continuous functional calculus.

Let $\mathcal{K}$ denote the compact operators on a separable infinite-dimensional Hilbert space.
For $a,b\in(A\otimes\Cpct)_+$, we say that $a$ is \emph{Cuntz subequivalent} to $b$, and write $a\precsim b$, if there are $r_n\in A\otimes\mathcal{K}$ such that $r_n^*br_n\to a$ in norm.
We say that $a$ is \emph{Cuntz equivalent} to $b$, and write $a\sim b$, if $a \precsim b$ and $b \precsim a$.

We identify finite matrix algebras over $A$ with corners in $A\otimes\mathcal{K}$.
Lower semicontinuous traces extend canonically to $A\otimes\mathcal{K}$, again without normalizing the matrix traces.
We will use that $a\precsim b$ implies $d_\tau(a)\leq d_\tau(b)$.
For $b\in A_+$, set $\Her(b):=\overline{bAb}$.

%==========================================================================================
\begin{lma}
\label{prp:CompressUnital}
Let $A$ be a unital \ca{}, let $b \in A_+$ be full, and let $a \in A_+$.
Then there exists $c \in\Her(b)_+$ such that $\tau(c)=\tau(a)$ for all $\tau \in T_1(A)$.
\end{lma}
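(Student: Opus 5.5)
Since $b$ is full and $A$ is unital, the ideal generated by $b$ contains $1$. So there are finitely many $x_1,\dots,x_n\in A$ with $\sum_{i=1}^n x_i^* b x_i$ invertible. Hence there is $y\in A$ with
\[
1=\sum_{i=1}^n y_i^* b y_i,
\]
where we absorb the inverse square root into the $x_i$, setting $y_i := x_i g^{-1/2}$ for $g:=\sum_j x_j^*bx_j$. Write $b=b^{1/2}b^{1/2}$ and set $z_i:=b^{1/2}y_i$, so that $1=\sum_i z_i^*z_i$.

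We then conjugate $a$ by these elements. Define
\[
c:=\sum_{i=1}^n z_i a z_i^* = \sum_{i=1}^n b^{1/2}y_i a y_i^* b^{1/2}.
\]
Each summand is positive and lies in $b^{1/2}Ab^{1/2}\subseteq\Her(b)$, so $c\in\Her(b)_+$.

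To compute traces, we use the trace property $\tau(xx^*)=\tau(x^*x)$, applied with $x=z_ia^{1/2}$. For every $\tau\in T_1(A)$ this gives
\[
\tau(c)=\sum_i\tau\bigl(z_ia^{1/2}(z_ia^{1/2})^*\bigr)=\sum_i\tau\bigl(a^{1/2}z_i^*z_ia^{1/2}\bigr)=\tau\Bigl(a^{1/2}\Bigl(\sum_i z_i^*z_i\Bigr)a^{1/2}\Bigr)=\tau(a).
\]

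The only substantive step is producing the identity $1=\sum z_i^*z_i$ with each $z_i\in b^{1/2}A$. It follows because fullness gives an element of the algebraic ideal generated by $b$ close to $1$, hence invertible. That element can be rewritten in the form $\sum_j x_j^*bx_j$ with finitely many terms, using the standard fact that an element $\sum u_jbv_j$ of the algebraic ideal is dominated by an element of the form $\sum w_j^*bw_j$ via polarization and the inequality $u b v + v^*bu^*\le ubu^*+v^*bv$. Since everything is finite and the trace identity is exact, no limiting argument over traces is needed. The result holds uniformly for all $\tau\in T_1(A)$, and indeed for all bounded traces.
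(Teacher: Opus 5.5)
Your proof is correct and follows the paper's argument exactly. You use the same element $c=\sum_j b^{1/2}y_j a y_j^*b^{1/2}$ and the same trace identity based on $\sum_j y_j^*by_j=1$; the only addition is that you justify the existence of the $y_j$ (an invertible element of the algebraic ideal, the polarization inequality, then conjugation by $g^{-1/2}$), which the paper simply takes as a standard consequence of fullness.
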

\begin{proof}
Since $b$ is full, there exist $y_1,\ldots,y_N\in A$ such that
\[
\sum_{j=1}^N y_j^*by_j=1.
\]
Set
\[
c 
:= \sum_{j=1}^N b^{\frac{1}{2}}y_j a y_j^*b^{\frac{1}{2}} \in \Her(b)_+.
\]
Then
\[
\tau(c)
= \sum_{j=1}^N \tau\bigl( b^{\frac{1}{2}}y_j a y_j^*b^{\frac{1}{2}} \bigr)
= \sum_{j=1}^N \tau\bigl( a^{\frac{1}{2}}y_j^*by_j a^{\frac{1}{2}} \bigr)
= \tau(a)
\]
for every $\tau \in T_1(A)$.
\end{proof}

%==========================================================================================
The next estimate converts an evaluation into a rank function.
Its error is bounded by the rank of the original element, independently of its norm.
We state it for traces that may take infinite values, as this will be needed later.

%==========================================================================================
\begin{lma}
\label{prp:Rounding}
Let $A$ be a \ca{}, let $c \in A_+$, and let $m \geq\max\{1,\|c\|\}$ be an integer.
Set
\[
e := \bigoplus_{j=1}^m (c-j)_+\in M_m(A)_+.
\]
Then
\[
d_\tau(e)
\leq \tau(c)
\leq d_\tau(e)+d_\tau(c)
\]
for every lower semicontinuous trace $\tau\colon A_+\to[0,\infty]$.
\end{lma}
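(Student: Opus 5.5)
The plan is to reduce everything to a pointwise inequality of scalar functions on the spectrum of $c$, and then pass through the trace. Since $d_\tau$ is additive on direct sums, we have $d_\tau(e)=\sum_{j=1}^m d_\tau((c-j)_+)$. For $t\geq 0$ and a function $g\geq0$ continuous on $\spec(c)$ with $g(0)=0$, we have $d_\tau(g(c))=d_\tau(\charFct_{\{g>0\}}(c))$ in the sense of lower semicontinuous functional calculus. Concretely, $d_\tau((c-j)_+)=\lim_n \tau(h_n(c))$, where $h_n(t)=((t-j)_+)^{1/n}$ increases pointwise to $\charFct_{(j,\infty)}(t)$.

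Consider the scalar functions on $[0,m]\supseteq \spec(c)$ given by
\[
N(t):=\sum_{j=1}^m \charFct_{(j,\infty)}(t) \andSep D(t):=\charFct_{(0,\infty)}(t).
\]
For $t\in[0,m]$ we have $N(t)=\lceil t\rceil-1$ when $t>0$, and $N(0)=0$. Hence
\[
N(t)\leq t\leq N(t)+D(t)
\]
for all $t\in[0,m]$; this is where $m\geq\|c\|$ is used, so that no mass of $c$ sits above $m$.

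To transfer this to traces, I will use monotone approximations. For the left inequality, set $g_n(t):=\sum_{j=1}^m ((t-j)_+)^{1/n}$. A small issue is that $g_n\leq t$ fails near $t=j$, since $((t-j)_+)^{1/n}$ can exceed $t-j$. I will therefore replace $g_n$ by $\min\{g_n(t),t\}$, or better use functions $\phi_{n,j}$ that increase to $\charFct_{(j,\infty)}$, satisfy $\sum_j\phi_{n,j}(t)\leq t$, and have $\phi_{n,j}(c)\sim (c-j)_+$ (same support). For instance, take $\phi_{n,j}(t)=\min\{1,n(t-j)_+\}$, which vanishes for $t\le j$ and is at most $1$, so the sum is at most $N(t)\leq t$. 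Then $\tau(\bigoplus_j\phi_{n,j}(c))\leq\tau(c)$, and letting $n\to\infty$, using that the rank is the supremum of $\tau(\phi(c))$ over $0\le\phi\le 1$ supported on the open support, gives $d_\tau(e)\leq\tau(c)$.

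For the right inequality, I will write $c=\sum_{j=0}^{m-1}\psi_j(c)$ with $\psi_j(t)=\min\{1,(t-j)_+\}$; this is exact on $[0,m]$. Each $\psi_j\leq\charFct_{(j,\infty)}$, so by lower semicontinuity $\tau(\psi_j(c))\leq d_\tau((c-j)_+)$, where the $j=0$ term is $d_\tau(c)$. Summing gives $\tau(c)\leq d_\tau(c)+\sum_{j=1}^{m-1}d_\tau((c-j)_+)\leq d_\tau(c)+d_\tau(e)$. The key fact needed is that $\tau(\psi(c))\leq d_\tau(\phi(c))$ whenever $0\leq\psi\leq1$ and $\{\psi>0\}\subseteq\{\phi>0\}$. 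This follows because $\psi(c)\leq \lim_n \phi(c)^{1/n}$ in the strict/monotone sense: we have $\psi\leq \phi^{1/n}+\varepsilon_n$ only approximately. So instead I will use $\psi\cdot\phi^{1/n}\uparrow\psi$ and $\tau(\psi\phi^{1/n}(c))\leq\tau(\phi^{1/n}(c))$, then apply lower semicontinuity of $\tau$. The main obstacle is exactly this careful handling of lower semicontinuity with possibly infinite-valued traces; the values involved may be $\infty$, but all the inequalities survive in $[0,\infty]$.
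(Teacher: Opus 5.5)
Your argument is correct, but it takes a different route from the paper. The paper restricts $\tau$ to $C^*(c)\cong C_0(\spec(c)\setminus\{0\})$ and represents it by a Borel measure $\mu$. It must first dispose of the case $\tau(c)=\infty$ separately, via $\tau(c)\leq m\,d_\tau(c)$. It then identifies $\tau(c)$, $d_\tau(c)$ and $d_\tau((c-j)_+)$ with $\int s\,d\mu$, $\mu((0,\infty))$ and $\mu((j,\infty))$, and integrates the pointwise inequality $N\leq t\leq N+D$.

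You never pass to a measure. You bound the indicator functions by continuous ones: the $\phi_{n,j}$ from below for the first inequality, and the exact decomposition $t=\sum_{j=0}^{m-1}\psi_j(t)$ with $\psi_j\leq\charFct_{(j,\infty)}$ for the second. You then transfer these bounds through $\tau$ using only additivity, monotonicity and lower semicontinuity.

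The paper's version is shorter and makes the scalar inequality the whole content. Its cost is invoking the representation of lower semicontinuous weights on commutative \ca{s}, plus the case split for infinite values. Yours is more elementary and handles $\tau(c)=\infty$ with no extra work, at the cost of approximation bookkeeping.

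One detail in that bookkeeping should be stated correctly. $\psi_j\phi^{1/n}$ does not increase to $\psi_j$, since $((t-j)_+)^{1/n}$ decreases to $1$ for $t>j+1$. What you actually need, and what holds, is uniform convergence on $[0,m]$, which is exactly what lower semicontinuity of $\tau$ requires. It holds because $\psi_j$ is continuous and vanishes wherever $(t-j)_+$ does; explicitly, the error is at most $\max\{1/(n+1),\,m^{1/n}-1\}$.

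Likewise, the identity $d_\tau((c-j)_+)=\sup_n\tau(\phi_{n,j}(c))$ that you use for the first inequality is standard. It follows from the same kind of uniform approximation combined with lower semicontinuity.
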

\begin{proof}
If $\tau(c)=\infty$, then $d_\tau(c)=\infty$, since
$\tau(c)\leq m d_\tau(c)$, and the inequalities hold.
Assume that $\tau(c)<\infty$.

Let $\spec(c)$ denote the spectrum of~$c$.
Functional calculus identifies $C^*(c)$ with
$C_0(\spec(c)\setminus\{0\})$.
The restriction of~$\tau$ to $C^*(c)$ is a lower semicontinuous weight,
and therefore is given by integration against a regular Borel measure~$\mu$
on $\spec(c)\setminus\{0\}$;
see \cite[Example~II.6.7.2(v)]{Bla06OpAlgs}.
We extend~$\mu$ to a measure on~$\RR$ by declaring the complement of
$\spec(c)\setminus\{0\}$ to have measure zero.

Then functional calculus gives
\[
\tau(c)
= \int s\,d\mu(s),
\qquad
d_\tau(c)
= \int \charFct_{(0,\infty)}(s) \,d\mu(s),
\]
and
\[
d_\tau((c-j)_+)
= \int \charFct_{(j,\infty)}(s) \,d\mu(s).
\]
Since $\spec(c)\subseteq[0,m]$, for $s\in[0,m]$ one has
\[
\sum_{j=1}^m \mathbf{1}_{(j,\infty)}(s)
\leq s
\leq\sum_{j=1}^m \mathbf{1}_{(j,\infty)}(s)
+\mathbf{1}_{(0,\infty)}(s).
\]
Integrating these inequalities and using additivity of rank on direct sums proves the result.
\end{proof}

%==========================================================================================
For a compact convex set~$K$, we let $\Aff(K)$ denote the space of continuous affine functions from~$K$ to~$\RR$, and write $\Aff(K)_{++}$ for the set of strictly positive functions in~$\Aff(K)$.

A \ca{} is said to be \emph{elementary} if it is isomorphic to the compact operators on a Hilbert space.

%==========================================================================================
\begin{thm}
\label{prp:RanksSimpleUnital}
Let $A$ be a unital, simple, nonelementary \ca{}.
Assume that $T_1(A)\neq\emptyset$, and let $f\in\Aff(T_1(A))_{++}$ and $\varepsilon>0$.
Then there exist $m\geq1$ and $e\in M_m(A)_+$ such that
\[
\bigl| f(\tau)-d_\tau(e) \bigr| \leq \varepsilon
\]
for every $\tau \in T_1(A)$.
\end{thm}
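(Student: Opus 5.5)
The plan is to realize $f$ exactly as a trace, compress it into a hereditary subalgebra of small rank, and then round it to a rank using \autoref{prp:Rounding}. The error in that rounding is controlled by the rank of the compressed element, which we make smaller than $\varepsilon$.

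First, realize $f$ as a trace function. Since $f \in \Aff(T_1(A))_{++}$, I would use the standard fact that $\Aff(T_1(A))$ is the uniform closure of $\{\tau\mapsto\tau(a) : a\in A_{sa}\}$. This follows from Hahn--Banach and the identification of $T_1(A)$ as a compact convex set of states, by Cuntz--Pedersen. It gives $a_0\in A_{sa}$ with $|f(\tau)-\tau(a_0)|\leq\varepsilon/4$ on $T_1(A)$. Since $f\geq\delta>0$ by compactness, one can arrange $a_0\in A_+$. One way is to replace $f$ by $f-\delta/2$, approximate within $\delta/4$, and then use the Cuntz--Pedersen result that a self-adjoint element with strictly positive trace values is tracially equivalent to a positive element; that is, $a_0 = a + \sum (x_ix_i^*-x_i^*x_i)$ with $a\geq0$. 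Adjusting constants, this yields $a\in A_+$ with $|f(\tau)-\tau(a)|\leq\varepsilon/2$ for all $\tau\in T_1(A)$.

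Second, choose a small full element. Because $A$ is simple, unital, nonelementary and has a tracial state, every nonzero hereditary subalgebra is infinite-dimensional. A standard construction then gives a nonzero $b\in A_+$ with $d_\tau(b)\leq\varepsilon/2$ for all $\tau\in T_1(A)$:
\begin{itemize}
\item take $n>2/\varepsilon$ and find $n$ mutually orthogonal nonzero positive elements that are pairwise Cuntz equivalent;
\item this is possible by the Glimm-type lemma (simple nonelementary implies there is a nonzero $*$-homomorphism $C_0((0,1])\otimes M_n\to A$);
\item let $b$ be one of the $n$ summands, so that $n\,d_\tau(b)\leq d_\tau(1)=1$.
\end{itemize}
By simplicity $b$ is full. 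Securing this uniformly small rank is the step I expect to be the main obstacle. The Glimm lemma handles it cleanly, but one must check that the orthogonal copies are Cuntz equivalent, and hence have equal $d_\tau$ for every $\tau$.

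Third, compress and round. Apply \autoref{prp:CompressUnital} to $b$ and $a$ to get $c\in\Her(b)_+$ with $\tau(c)=\tau(a)$ for all $\tau\in T_1(A)$. Since $c\in\Her(b)$, we have $c\precsim b$, hence $d_\tau(c)\leq d_\tau(b)\leq\varepsilon/2$. Choose an integer $m\geq\max\{1,\|c\|\}$ and set $e:=\bigoplus_{j=1}^m(c-j)_+\in M_m(A)_+$. Then \autoref{prp:Rounding} gives
\[
0 \leq \tau(c)-d_\tau(e) \leq d_\tau(c) \leq \varepsilon/2
\]
for every $\tau\in T_1(A)$. Combining this with $|f(\tau)-\tau(c)|\leq\varepsilon/2$ gives $|f(\tau)-d_\tau(e)|\leq\varepsilon$, which is the claim.
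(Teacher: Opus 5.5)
Your proposal is correct and follows the paper's proof step for step: a full element $b$ with $n\,d_\tau(b)\leq 1$ from Glimm's lemma, compression via \autoref{prp:CompressUnital}, and rounding via \autoref{prp:Rounding}. The only difference is that you approximate $f$ by $\tau\mapsto\tau(a)$ with $a\in A_+$ (density plus Cuntz--Pedersen positivity) and split $\varepsilon$ in half, whereas the paper cites the exact realization result of Brown--Perera--Toms; this is harmless, and the shift by $\delta/2$ is unnecessary, since approximating $f$ within $\min\{\delta/2,\varepsilon/2\}$ directly suffices.
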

\begin{proof}
By \cite[Corollary~3.10]{BroPerTom08CuElliottConj}, which is based on \cite{CunPed79EquivTraces}, there exists $a \in A_+$ such that $f(\tau)=\tau(a)$ for every $\tau \in T_1(A)$.

Choose an integer $n\geq2$ such that $\tfrac{1}{n} \leq \varepsilon$.
Since $A$ is simple and nonelementary, it has an infinite-dimensional irreducible representation.
Glimm's lemma, in the form of \cite[Proposition~3.10]{RobRor13Divisibility}, gives a nonzero $*$-homomorphism
\[
\varphi\colon C_0((0,1])\otimes M_n\longrightarrow A.
\]
Let $\iota\in C_0((0,1])$ be the function $\iota(t)=t$, and set $b_j:=\varphi(\iota\otimes e_{jj})$ for $j=1,\ldots,n$.
These elements are nonzero, pairwise orthogonal, and Cuntz equivalent.
Thus, for $b:=b_1$, we have
\[
nd_\tau(b)=d_\tau\Bigl(\sum_{i=1}^n b_i\Bigr)\leq1
\]
for every $\tau \in T_1(A)$.
The element $b$ is full by simplicity of $A$.

Apply \autoref{prp:CompressUnital} to obtain $c\in\Her(b)_+$ with $\tau(c)=\tau(a)$ for all $\tau \in T_1(A)$.
Since  $c\in\Her(b)_+$, we have $c \precsim b$ and therefore
\[
d_\tau(c) \leq d_\tau(b) \leq \frac{1}{n} \leq \varepsilon
\]
for all $\tau \in T_1(A)$.

Choose an integer $m \geq \max\{1,\|c\|\}$ and apply \autoref{prp:Rounding}.
The resulting element $e \in M_m(A)_+$ satisfies
\[
d_\tau(e)
\leq \tau(c) %= \tau(a) = f(\tau)
\leq d_\tau(e) + d_\tau(c)
\leq d_\tau(e) + \varepsilon
\]
for every $\tau \in T_1(A))$.
Since $\tau(c)=\tau(a)=f(\tau)$, the conclusion follows.
\end{proof}

%==========================================================================================
\begin{rmk}
The proof of \autoref{prp:RanksSimpleUnital} only uses simplicity and nonelementarity to obtain full elements of arbitrarily small rank uniformly on $T_1(A)$.
Thus the conclusion holds for every unital \ca{} with this property.
\end{rmk}

%==========================================================================================
We use the Cuntz semigroup $\Cu(A)$ of a \ca{} $A$ with its usual order, addition, and sequential suprema;
see \cite{CowEllIva08CuInv,AntPerThi18TensorProdCu}.
Its elements are Cuntz equivalence classes $[a]$ of positive elements in $A\otimes\Cpct$.
It is known that increasing sequences in $\Cu(A)$ have suprema.
For $x',x\in\Cu(A)$, we write $x'\ll x$ if, whenever $x\leq\sup_n x_n$ for an increasing sequence $(x_n)_n$, one has $x'\leq x_n$ for some $n$.
Then $[(a-\varepsilon)_+]\ll[a]$ for every $a \in (A\otimes\Cpct)_+$ and $\varepsilon>0$.
Further, we have $[a]=\sup_n[(a-\tfrac{1}{n})_+]$, and in fact every element in $\Cu(A)$ is the supremum of a $\ll$-increasing sequence.

For a lower semicontinuous trace $\tau$ on~$A$ and $x \in \Cu(A)$, we set $\widehat{x}(\tau):=d_\tau(a)$, which is independent of the representative $a \in (A\otimes\Cpct)_+$ when $x=[a]$.

The semigroup $\Cu(A)$ is \emph{almost unperforated} if for all $x,y \in \Cu(A)$ and  $k\geq 1$, if $(k+1)x\leq ky$ then $x\leq y$.
It is \emph{almost divisible} if for $x'\ll x$ in $\Cu(A)$ and $k\geq1$, there exists $y\in\Cu(A)$ such that
\[
ky\leq x
\andSep
x'\leq(k+1)y.
\]
The \ca{} $A$ is \emph{pure} if $\Cu(A)$ is almost unperforated and almost divisible.

We use $\QT(A)$ to denote the cone of lower semicontinuous $2$-quasitraces on~$A$.
It is known that $\Cu(A)$ is almost unperforated if and only if $A$ has
\emph{strict comparison}, in the sense that $a,b\in(A\otimes\Cpct)_+$
satisfy $a\precsim b$ whenever there exists $\gamma\in(0,1)$ such that $d_\tau(a)\leq \gamma d_\tau(b)$ for every $\tau\in\QT(A)$;
see \cite[Proposition~4.2]{EllRobSan11Cone}, and also
\cite[Paragraph~3.4]{AntPerThiVil24arX:PureCAlgs}.

Following Ng--Robert \cite[Definition~3.1]{NgRob16CommutatorsPureCa}, we say that $A$ has \emph{strict comparison by traces} if $a,b\in(A\otimes\Cpct)_+$
satisfy $a\precsim b$ whenever there exists $\gamma\in(0,1)$ such that $d_\tau(a)\leq \gamma d_\tau(b)$ for every lower semicontinuous trace~$\tau$ on~$A$.
By \cite[Theorem~3.6, Remark~3.7]{NgRob16CommutatorsPureCa}, a \ca{} $A$ has strict comparison by traces if and only if $\QT(A)=\T(A)$ and $A$ has strict comparison.

For exact \ca{s}, every lower semicontinuous $2$-quasitrace is a trace;
see \cite{Haa14Quasitraces} and the discussion of the nonunital case in \cite[Section~2]{TikTom15CuSgpNonunital}.

%==========================================================================================
The next result can be deduced from \cite[Proposition~2.1]{Tom11CharClassifiableAH}, or from \cite[Proposition~2.11]{Thi20RksOps}.
We include a proof for the convenience of the reader.

%==========================================================================================
\begin{cor}
\label{prp:PureSimpleUnital}
Let $A$ be a unital, simple, nonelementary \ca{} with strict comparison by traces.
Then $A$ is pure.
\end{cor}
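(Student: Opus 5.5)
Strict comparison by traces gives $\QT(A)=\T(A)$ and strict comparison, so $\Cu(A)$ is almost unperforated. It therefore remains to prove almost divisibility. The plan is to use \autoref{prp:RanksSimpleUnital} to build an element whose rank is approximately $\tfrac{1}{k+1}$ of (a slightly shrunk) rank of $x$, and then to turn rank inequalities into Cuntz inequalities via strict comparison by traces.

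Let $x'\ll x$ in $\Cu(A)$ and $k\geq1$. We may assume $x\neq0$. Write $x=[a]$ and pick $\delta>0$ with $x'\leq[(a-\delta)_+]$. Set $x'':=[(a-\delta/2)_+]$, so that $x'\leq[(a-\delta)_+]\ll x''\ll x$.

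The functions $\widehat{x''}$ and $\widehat{x}$ are only lower semicontinuous on $T_1(A)$, and may take the value $\infty$. I would handle this as follows.
\begin{itemize}
\item If $x$ is infinite-rank (e.g.\ properly infinite in the simple case), the claim should be easy: then $x=\infty$ and $y=x$ works.
\item Otherwise, $\widehat{x}$ is bounded on $T_1(A)$, since $(a-\delta/2)_+\precsim$ a multiple of $1$; indeed $\widehat{x''}\leq N$ for some $N$ because $x''\ll x\leq\infty\cdot[1]$.
\end{itemize}
By simplicity, $\widehat{(a-\delta)_+}$ is strictly positive on $T_1(A)$ when $(a-\delta)_+\neq0$. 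If $(a-\delta)_+=0$, then $x'=0$ and $y=0$ works. The set $T_1(A)$ is compact, and $d_\tau((a-\delta)_+)\leq\tau(g((a-\delta/2)_+))\leq d_\tau((a-\delta/2)_+)$ for a suitable continuous $g$ with values in $[0,1]$, equal to $1$ on $[\delta/2,\infty)$ and $0$ near $0$. Hence there is a continuous affine function $h(\tau):=\tau(g(\cdot))$, strictly positive, with
\[
\widehat{x'}\leq h\leq\widehat{x''}.
\]
Strict positivity of $h$ uses simplicity together with compactness, so that $h\geq\eta>0$.

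Now apply \autoref{prp:RanksSimpleUnital} to $f:=\tfrac{h}{k+1/2}\in\Aff(T_1(A))_{++}$ with $\varepsilon$ small relative to $\eta$, say $\varepsilon<\tfrac{\eta}{4(k+1)^2}$. This gives $e$ with $|d_\tau(e)-f(\tau)|\leq\varepsilon$. Set $y:=[e]$ and compare the ranks.
\begin{itemize}
\item For the upper bound, $k\widehat{y}\leq\tfrac{k}{k+1/2}h+k\varepsilon\leq\gamma h\leq\gamma\widehat{x}$ for some $\gamma<1$, using $h\geq\eta$.
\item For the lower bound, $(k+1)\widehat{y}\geq\tfrac{k+1}{k+1/2}h-(k+1)\varepsilon\geq\gamma^{-1}h\geq\gamma^{-1}\widehat{x'}$ for some $\gamma<1$.
\end{itemize}
Both inequalities hold on $T_1(A)$, and by homogeneity they extend to all bounded traces. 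For unbounded lower semicontinuous traces, simplicity and unitality force $\tau=\infty$ on nonzero elements, so the comparison there is trivial: both sides are infinite when the elements are nonzero. Strict comparison by traces then yields $ky\leq x$ and $x'\leq(k+1)y$.

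The main obstacle is the middle step: producing a continuous strictly positive affine $h$ squeezed between $\widehat{x'}$ and $\widehat{x}$. The step needs strict inequality margins that survive the $\varepsilon$-error, and it needs care with the noncompact trace cone in $A\otimes\Cpct$. My first attempt is the functional-calculus sandwich $d_\tau((a-\delta)_+)\leq\tau(g)\leq d_\tau((a-\delta/2)_+)$ described above. A further point to check is that $g((a-\delta/2)_+)$ lies in $M_r(A)$ for some $r$, which can be arranged by first replacing $a$ with an element in a matrix algebra up to a small cutdown.
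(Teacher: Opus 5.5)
Your main argument is the paper's proof. It uses the same sandwich $\widehat{x'}\leq h\leq\widehat{x}$, with $h(\tau)=\tau(g(\cdot))$ continuous, affine and bounded below by $\eta>0$ on the compact set $T_1(A)$. It applies \autoref{prp:RanksSimpleUnital} to $h/(k+\tfrac12)$ with tolerance small relative to $\eta$, and then makes the same passage from $T_1(A)$ to all lower semicontinuous traces. The paper avoids your auxiliary $x''$ by choosing $a\in M_l(A)_+$ with $x'\ll[a]\leq x$ at the outset; this is exactly the matrix reduction you mention at the end. Your estimates with $\varepsilon<\eta/(4(k+1)^2)$ check out.

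What is missing is the case $T_1(A)=\emptyset$, which the statement does not exclude. There \autoref{prp:RanksSimpleUnital} cannot be applied and no $\eta$ exists, so your construction of $y$ never starts. The paper treats this case separately. Then $\T(A)$ consists only of the zero trace and the trace that is $\infty$ on every nonzero positive element. Strict comparison by traces (with $\gamma=\tfrac12$) therefore makes all nonzero positive elements Cuntz equivalent, so $\Cu(A)\cong\{0,\infty\}$ and $y:=x$ works.

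Your first bullet may have been meant to cover this, but it gives no reason why $x=\infty$. As a dichotomy it is also inaccurate when $T_1(A)\neq\emptyset$. The function $\widehat{x}$ can be infinite at some tracial states and finite at others, so ``otherwise $\widehat{x}$ is bounded'' fails. For such $x\neq\infty$, the choice $y=x$ violates $ky\leq x$.

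Drop that bullet. Your main argument only uses $h\leq\widehat{x''}\leq\widehat{x}$, so it already handles every nonzero $x$ once $T_1(A)\neq\emptyset$. Adding the short traceless case then completes the proof.
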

\begin{proof}
In general, strict comparison by traces implies strict comparison, and hence almost unperforation of the Cuntz semigroup.

Suppose first that $T_1(A)=\emptyset$.
Then $\T(A)=\{\tau_0,\tau_\infty\}$, where $\tau_0(a)=0$ for every
$a\in A_+$, while $\tau_\infty(0)=0$ and $\tau_\infty(a)=\infty$ for every
nonzero $a\in A_+$.
Thus, for any nonzero $a,b\in(A\otimes\Cpct)_+$ and every $\tau\in\T(A)$, we have
\[
d_\tau(a)\leq \tfrac{1}{2}d_\tau(b)=d_\tau(b).
\]
Strict comparison by traces therefore gives $a\precsim b$.
By symmetry, $a\sim b$, so all nonzero positive elements are Cuntz equivalent.
Hence $\Cu(A)\cong\{0,\infty\}$, and $A$ is purely infinite, and in particular pure.

Now assume that $T(A)\neq\emptyset$.
Fix $x'\ll x$ in $\Cu(A)$ and $k\geq1$.
If $x'=0$, take $y=0$.
Using that Cuntz classes of elements in matrix algebras over $A$ are sup-dense in $\Cu(A)$, choose $a \in M_l(A)_+$ such that
\[
x'\ll [a] \leq x.
\]
Then choose $\varepsilon>0$ such that
\[
x' \leq [(a-\varepsilon)_+].
\]

Choose a continuous function $g\colon[0,\infty)\to[0,1]$ that vanishes at~$0$
and equals one on $[\varepsilon,\infty)$.
Define $f_0\colon T_1(A)\to\RR$ by $f_0(\tau):=\tau(g(a))$.
Since $g(a)\neq0$ and every tracial state on the simple \ca{} $A$ is faithful,
we have $f_0\in\Aff(T_1(A))_{++}$.
Moreover,
\[
d_\tau((a-\varepsilon)_+)
\leq f_0(\tau)
\leq d_\tau(a)
\]
for every $\tau\in T_1(A)$.
Set
\[
m:=\min_{\tau\in T_1(A)}f_0(\tau)>0,
\qquad
s:=k+\tfrac12.
\]

Apply \autoref{prp:RanksSimpleUnital} for the strictly positive, continuous, affine functions $f:=\frac{1}{s}f_0$ and tolerance $\delta:=\frac{m}{4s(k+1)}$ to obtain $e\in(A\otimes\Cpct)_+$ such that
\[
\bigl|f(\tau)-d_\tau(e)\bigr|\leq\delta
\]
for every $\tau\in T_1(A)$.
Set $y:=[e]$.

Since $m\leq f_0(\tau)$, we have
\[
d_\tau(e)
\leq
\frac{1}{s}\left(1+\frac{1}{4(k+1)}\right)f_0(\tau)
\leq
\frac{1}{s}\left(1+\frac{1}{4(k+1)}\right)d_\tau(a).
\]
Thus
\[
k d_\tau(e)
\leq
\gamma_1 d_\tau(a),
\quad\text{for }
\gamma_1:=
\frac{k}{s}\left(1+\frac{1}{4(k+1)}\right)<1.
\]
Similarly,
\[
d_\tau(e)
\geq
\frac{1}{s}\left(1-\frac{1}{4(k+1)}\right)f_0(\tau)
=
\frac{k+\frac34}{s(k+1)}f_0(\tau),
\]
and hence
\[
d_\tau((a-\varepsilon)_+)
\leq
\gamma_2(k+1)d_\tau(e),
\quad\text{for }
\gamma_2:=
\frac{s}{k+\frac34}<1.
\]
These inequalities extend by homogeneity to all lower semicontinuous traces
on~$A$.
Strict comparison by traces therefore gives
\[
ky\leq[a]\leq x
\andSep
x'\leq[(a-\varepsilon)_+]\leq(k+1)y.
\]
Thus $\Cu(A)$ is almost divisible.
\end{proof}

%==========================================================================================
A \ca{} has \emph{locally finite nuclear dimension} if each finite subset can be
approximated arbitrarily well by a sub-\ca{} of finite nuclear dimension.
Every \ca{} with locally finite nuclear dimension is nuclear.

We now combine \autoref{prp:PureSimpleUnital} with the known absorption and
dimension theorems to verify the Toms--Winter conjecture for \ca{s} with locally
finite nuclear dimension, and in particular for approximately subhomogeneous
(ASH) \ca{s}.

%==========================================================================================
\begin{thm}
\label{prp:TomsWinter}
Let $A$ be a separable, unital, simple, nonelementary \ca{} with locally finite
nuclear dimension.
Then the following are equivalent:
\begin{enumerate}[label=\textup{(\arabic*)}]
\item $A$ has finite nuclear dimension;
\item $A$ is $\calZ$-stable;
\item $A$ has strict comparison;
\item[$(3')$] $A$ is pure.
\end{enumerate}
In this case, $\dim_{\mathrm{nuc}}(A)\leq1$.
\end{thm}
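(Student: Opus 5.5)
The plan is to establish the cycle of implications $(1)\Rightarrow(2)\Rightarrow(3')\Rightarrow(3)\Rightarrow(3')\Rightarrow(2)\Rightarrow(1)$, drawing on known theorems together with \autoref{prp:PureSimpleUnital}. Locally finite nuclear dimension makes $A$ nuclear, hence exact. By Haagerup's theorem, lower semicontinuous $2$-quasitraces on $A$ are then traces. Consequently strict comparison and strict comparison by traces coincide, using \cite[Theorem~3.6]{NgRob16CommutatorsPureCa}.

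\textbf{Easy implications.} $(1)\Rightarrow(2)$ is Winter's theorem \cite{Win12NuclDimZstable}, which applies to separable, simple, unital, nonelementary \ca{s} of finite nuclear dimension. $(2)\Rightarrow(3')$ is R{\o}rdam's result \cite{Ror04StableRealRankZ}: $\calZ$-stable algebras have almost unperforated Cuntz semigroups. Almost divisibility for $\calZ$-stable algebras is also standard (e.g.\ via $\Cu(A)\cong\Cu(A\otimes\calZ)$ and divisibility in $\Cu(\calZ)$). $(3')\Rightarrow(3)$ holds because almost unperforation is equivalent to strict comparison \cite[Proposition~4.2]{EllRobSan11Cone}. $(3)\Rightarrow(3')$ is the new input: by exactness, strict comparison is strict comparison by traces, so \autoref{prp:PureSimpleUnital} gives pureness.

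\textbf{Main step.} The expected obstacle is $(3')\Rightarrow(2)$ (equivalently $(3)\Rightarrow(2)$). This needs a Winter-type theorem deducing $\calZ$-stability from pureness under a local finite-dimensionality hypothesis, rather than global finite nuclear dimension. My first attempt is Winter's result in \cite{Win12NuclDimZstable}, which in fact is proved for locally finite nuclear dimension: a separable, simple, unital, nonelementary, pure \ca{} with locally finite nuclear dimension is $\calZ$-stable. I would check carefully that the hypotheses are stated there in this local form; the citation in the introduction suggests they are. Should that fail, the fallback is to show that pureness gives strict comparison and almost divisibility (hence $m$-comparison and tracial divisibility). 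One could then invoke the Matui--Sato / Kirchberg--R{\o}rdam route through uniform property $\Gamma$ and tracially large order zero maps. That route is harder in the non-finite-nuclear-dimension setting, so I would rely on Winter.

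\textbf{Closing.} $(2)\Rightarrow(1)$, together with the bound $\dim_{\mathrm{nuc}}(A)\leq1$, is the theorem of Castillejos--Evington--Tikuisis--White--Winter \cite{CasEviTikWhiWin21NucDimSimple}. It applies since $A$ is separable, simple, unital, nuclear, nonelementary, and $\calZ$-stable. Combining these implications closes the cycle, and the final assertion follows because any of the equivalent conditions yields $\calZ$-stability and hence $\dim_{\mathrm{nuc}}(A)\leq1$.
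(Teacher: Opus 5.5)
Your proposal is correct and matches the paper's proof: Winter's theorem gives $(1)\Rightarrow(2)$ and $(3')\Rightarrow(2)$, R{\o}rdam gives $(2)\Rightarrow(3')$, Castillejos--Evington--Tikuisis--White--Winter gives $(2)\Rightarrow(1)$ together with $\dim_{\mathrm{nuc}}(A)\leq1$, and exactness plus \autoref{prp:PureSimpleUnital} gives the new implication $(3)\Rightarrow(3')$. Your hedge on $(3')\Rightarrow(2)$ is unnecessary, since Winter's Theorem~7.1 is stated for locally finite nuclear dimension (the paper cites exactly that), so the Matui--Sato fallback is never needed.
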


\begin{proof}
The implications (1)$\Rightarrow$(2) and (3')$\Rightarrow$(2) were shown in
\cite[Corollary~7.3, Theorem~7.1]{Win12NuclDimZstable}.
By \cite{CasEviTikWhiWin21NucDimSimple}, $\calZ$-stability implies nuclear
dimension at most one, and hence (2)$\Rightarrow$(1).
The implication (2)$\Rightarrow$(3') was shown in \cite{Ror04StableRealRankZ}; see also \cite[Proposition~5.2]{AntPerThiVil24arX:PureCAlgs}.
Clearly, (3') implies (3).

It remains to show that (3) implies (3').
Since $A$ has locally finite nuclear dimension, it is nuclear and hence exact.
Thus every lower semicontinuous $2$-quasitrace on~$A$ is a trace, so strict
comparison agrees with strict comparison by traces.
The implication (3)$\Rightarrow$(3') therefore follows from
\autoref{prp:PureSimpleUnital}.
\end{proof}

%==========================================================================================
\begin{rmk}
\label{rmk:ASH}
Approximately subhomogeneous \ca{s} have locally finite decomposition rank by
\cite[Corollary~2.2]{NgWin06NoteSH}, and hence locally finite nuclear dimension.
Thus \autoref{prp:TomsWinter} applies, in particular, to separable, unital, simple, nonelementary ASH algebras.
\end{rmk}

%==========================================================================================
%==========================================================================================
\section{Density and divisibility of ranks in Cu-semigroups}
\label{sec:RanksCu}

%==========================================================================================
We refer to \cite{AntPerThi18TensorProdCu} and \cite{GarPer26ModernCu} for an introduction to the theory of \CuSgp{s}.
For a \CuSgp{} $S$, let $\F(S)$ denote its cone of functionals, that is, the additive, order-preserving maps
$\lambda\colon S\to[0,\infty]$ that preserve zero and suprema of increasing sequences.
We equip $\F(S)$ with its standard compact Hausdorff cone topology.
For $x\in S$, we write
\[
\widehat{x}(\lambda):=\lambda(x),
\qquad \lambda\in\F(S).
\]

Let $\Lsc(\F(S))$ denote the cone of lower semicontinuous, additive, homogeneous,
order-preserving functions $\F(S)\to[0,\infty]$ that vanish at zero.
If $S$ satisfies \axiomO{5}, we denote by $\LL(\F(S))$ its realification,
equivalently the smallest subsemigroup of $\Lsc(\F(S))$ that is closed under
suprema of increasing sequences and contains all functions of the form
$\frac{1}{n}\widehat{x}$, for $x\in S$ and $n\geq1$;
see \cite[Section~3]{Rob13Cone}.
The semigroup $\LL(\F(S))$ is again a \CuSgp{} satisfying \axiomO{5}.

For $S=\Cu(A)$, functionals correspond naturally to lower semicontinuous
$2$-quasitraces on $A$;
see \cite[Proposition~4.2]{EllRobSan11Cone}.

%==========================================================================================
\begin{prp}
\label{prp:CharDenseK}
Let $S$ be a \CuSgp{} satisfying \axiomO{5}, and let $K\subseteq\F(S)$ be compact.
Then the following are equivalent:
\begin{enumerate}
\item
Ranks are dense on $K$:
for every $f' \ll f$ in $\LL(\F(S))$, there exists $y \in S$ such that
\[
f'|_K \leq \widehat{y}|_K \leq f|_K.
\]
\item
For every $x' \ll x$ in $S$ and every $k \geq 2$, there exist
$\varepsilon\in(0,1)$ and $y \in S$ such that
\[
k\widehat{y}|_K \leq (1-\varepsilon)\widehat{x}|_K,
\andSep
\widehat{x'}|_K \leq (1-\varepsilon)(k+1)\widehat{y}|_K.
\]
\item
Ranks are almost divisible on $K$:
for every $x' \ll x$ in $S$ and every $k \geq 2$, there exists $y \in S$ such that
\[
k\widehat{y}|_K \leq \widehat{x}|_K,
\andSep
\widehat{x'}|_K \leq (k+1)\widehat{y}|_K.
\]
\end{enumerate}
\end{prp}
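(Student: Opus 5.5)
The proof goes around the cycle (1)$\Rightarrow$(2)$\Rightarrow$(3)$\Rightarrow$(1). The implication (2)$\Rightarrow$(3) is immediate, since $(1-\varepsilon)\widehat{x}\leq\widehat{x}$ and $\widehat{x'}\leq(1-\varepsilon)(k+1)\widehat{y}\leq(k+1)\widehat{y}$.

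For (1)$\Rightarrow$(2), fix $x'\ll x$ in $S$ and $k\geq2$. We work in $\LL(\F(S))$, where $\widehat{x'}\ll\widehat{x}$. This holds because the embedding $S\to\LL(\F(S))$, $z\mapsto\widehat{z}$, preserves $\ll$ (Robert's realification, \cite[Section~3]{Rob13Cone}). We want functions $f'\ll f$ in $\LL(\F(S))$ with
\[
f\leq\tfrac{1-\varepsilon}{k}\widehat{x}
\andSep
\widehat{x'}\leq(1-\varepsilon)(k+1)f'.
\]
Choose $x''$ with $x'\ll x''\ll x$, and write $\widehat{x}$ as the supremum of a $\ll$-increasing sequence in $\LL(\F(S))$. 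Scaling in $\LL(\F(S))$ is available, and for $t<1$ one has $t\widehat{x}\ll\widehat{x}$ whenever $\widehat{x}$ is finite on relevant elements. To avoid that finiteness caveat, we instead use elements $g$ with $\widehat{x'}\ll g\ll\widehat{x}$ and the interpolation property of $\ll$ in the realification.

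Set $f:=\tfrac{1-\varepsilon}{k}g_2$ and $f':=\tfrac{1-\varepsilon}{k}g_1$ with $\widehat{x'}\ll g_1\ll g_2\ll\widehat{x}$. Then $\widehat{x'}\leq(1-\varepsilon)(k+1)f'$ requires
\[
\widehat{x'}\leq\tfrac{(1-\varepsilon)^2(k+1)}{k}g_1,
\]
which holds once $(1-\varepsilon)^2(k+1)/k\geq1$, that is, for $\varepsilon$ small. Applying (1) to $f'\ll f$ gives $y$ with $f'|_K\leq\widehat{y}|_K\leq f|_K$, and both inequalities of (2) follow.

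The main work is (3)$\Rightarrow$(1). Given $f'\ll f$ in $\LL(\F(S))$, first reduce to $f$ of a simple form. Every element of $\LL(\F(S))$ is a supremum of an increasing sequence of functions $\tfrac1n\widehat{x}$. Hence $f'\ll f$ yields $x\in S$ and $n$ with $f'\leq\tfrac1n\widehat{x}$ and $\tfrac1n\widehat{x}\ll f$ in a suitable sense, and also $x'\ll x$ with $f'\leq\tfrac1n\widehat{x'}$ after shrinking. Now apply (3) to $x'\ll x$ with a large $k$, a multiple of $n$, say $k=nN$. This gives $z$ with
\[
k\widehat{z}|_K\leq\widehat{x}|_K
\andSep
\widehat{x'}|_K\leq(k+1)\widehat{z}|_K.
\]
Take $y:=Nz$. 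Then $\widehat{y}|_K\leq\tfrac1n\widehat{x}|_K\leq f|_K$, and
\[
\widehat{y}|_K\geq\tfrac{N}{nN+1}\widehat{x'}|_K.
\]
The factor $\tfrac{nN}{nN+1}$ is the obstacle: we need slack to absorb it. So I would first insert a margin, choosing $x$ with $f'\ll(1-\delta)\tfrac1n\widehat{x'}$ and $\tfrac1n\widehat{x}\leq f$. Such a margin exists because $f'\ll f$ forces $f'\leq(1-\delta)f$ on the compact set $K$ where $f$ is relevant. This comparison is the delicate point: when $f$ is infinite or vanishing on parts of $K$, one must use compactness of $K$ and lower semicontinuity, in a Dini-type argument, to get $f'\leq(1-\delta)f''$ for some $f''\ll f$. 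I expect this uniform margin extraction on $K$ to be the main technical step. Once it is in place, choosing $N$ with $\tfrac{nN}{nN+1}\geq1-\delta$ finishes the proof.
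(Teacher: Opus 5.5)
Your overall scheme matches the paper's: the cycle, with (2)$\Rightarrow$(3) trivial. Your device in (3)$\Rightarrow$(1), applying (3) with $nN$ and setting $y:=Nz$, gives the same bounds the paper gets by applying (3) to $mx_n\ll mx$ with $mk$. However, there are two genuine gaps.

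\textbf{The implication (1)$\Rightarrow$(2).} You claim that $x'\ll x$ in $S$ implies $\widehat{x'}\ll\widehat{x}$ in $\LL(\F(S))$. This is false. In $S=\overline{\mathbb{N}}=\Cu(\mathbb{C})$ one has $1\ll 1$. But $\widehat{1}=\sup_m\frac{m}{m+1}\widehat{1}$, while $\widehat{1}\not\leq\frac{m}{m+1}\widehat{1}$ (evaluate at the functional with $\lambda(1)=1$), so $\widehat{1}\not\ll\widehat{1}$. Your functions $g_1,g_2$ would satisfy $\widehat{x'}\leq g_1\ll g_2\leq\widehat{x}$, which forces $\widehat{x'}\ll\widehat{x}$. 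So they need not exist, and your construction of $f'\ll f$ fails. What does hold is Robert's Lemma~2.2.5: $\gamma'\widehat{x'}\ll\gamma\widehat{x}$ whenever $x'\ll x$ and $\gamma'<\gamma$. This is exactly what the factor $1-\varepsilon$ in (2) is for. Choose $\varepsilon$ with $(1-\varepsilon)^2(k+1)>k$ strictly, so that $\frac{1}{(1-\varepsilon)(k+1)}<\frac{1-\varepsilon}{k}$. Then apply (1) directly to $\frac{1}{(1-\varepsilon)(k+1)}\widehat{x'}\ll\frac{1-\varepsilon}{k}\widehat{x}$.

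\textbf{The implication (3)$\Rightarrow$(1).} You leave the margin as the unproven ``main technical step'' and propose a Dini-type compactness argument on $K$. None is needed; compactness of $K$ plays no role here. The missing observation is that $\frac1n\widehat{x}=\sup_m\frac{m}{m+1}\cdot\frac1n\widehat{x}$ is an increasing supremum in $\LL(\F(S))$, since each term equals $\frac{1}{(m+1)n}\widehat{mx}$. Hence anything way below $\frac1n\widehat{x}$ already lies below some $\frac{m}{m+1}\cdot\frac1n\widehat{x}$. Concretely:
\begin{enumerate}
\item Interpolate $f'\ll f''\ll f'''\ll f$, and use sup-density to find $x\in S$ and $n\geq1$ with $f'''\leq\frac1n\widehat{x}\leq f$.
\item Then $f''\ll\frac1n\widehat{x}$, so $f''\leq\frac{m}{m+1}\cdot\frac1n\widehat{x}$ for some $m$.
\item Write $x=\sup_j x_j$ with $x_j\ll x$. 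Since $f'\ll f''$, you get $f'\leq\frac{m}{m+1}\cdot\frac1n\widehat{x_j}$ for some $j$.
\item Take $x':=x_j$ and choose $N\geq2$ with $nN\geq m$. Your $y:=Nz$ step then completes the proof.
\end{enumerate}
Note also that your first reduction only yields $f'\leq\frac1n\widehat{x}$, not $f'\ll\frac1n\widehat{x}$. So even the passage to some $x'\ll x$ with $f'\leq\frac1n\widehat{x'}$ requires such an interpolation.
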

%------------------------------------------------------------------------------------------
\begin{proof}
We first show that~(1) implies~(2).
Let $x' \ll x$ in $S$, and let $k \geq 2$.
Choose $\varepsilon\in(0,1)$ such that $\frac{k}{k+1}<(1-\varepsilon)^2$.
Then
\[
\frac{1}{(1-\varepsilon)(k+1)}
<
\frac{1-\varepsilon}{k}.
\]
Since $x'\ll x$, it follows from \cite[Lemma~2.2.5]{Rob13Cone} that
\[
\frac{1}{(1-\varepsilon)(k+1)}\widehat{x'}
\ll
\frac{1-\varepsilon}{k}\widehat{x}
\]
in $\Lsc(\F(S))$, and hence in $\LL(\F(S))$.
By~(1), there exists $y\in S$ such that
\[
\frac{1}{(1-\varepsilon)(k+1)}\widehat{x'}|_K
\leq \widehat{y}|_K
\leq
\frac{1-\varepsilon}{k}\widehat{x}|_K.
\]
Thus
\[
k\widehat{y}|_K
\leq (1-\varepsilon)\widehat{x}|_K,
\andSep
\widehat{x'}|_K
\leq (1-\varepsilon)(k+1)\widehat{y}|_K,
\]
as desired.

It is clear that~(2) implies~(3).

\medskip

We show that~(3) implies~(1).
Let $f' \ll f$ in $\LL(\F(S))$.
Since $\LL(\F(S))$ is a \CuSgp, we can choose $f'' \in \LL(\F(S))$ such that $f' \ll f'' \ll f$.
Using that functions of the form $\tfrac{1}{k}\widehat{x}$, for $x \in S$ and $k \geq 2$, are sup-dense in $\LL(\F(S))$, we obtain $x \in S$ and $k \geq 2$
such that
\[
f'' \ll \frac{1}{k}\widehat{x} \ll f.
\]
Since
\[
f'
\ll \frac{1}{k}\widehat{x}
= \sup_m \frac{m}{m+1}\frac{1}{k}\widehat{x},
\]
there exists $m \geq 1$ such that
\[
f''
\leq \frac{m}{m+1}\frac{1}{k}\widehat{x}
\leq \frac{m}{mk+1}\widehat{x}.
\]

Choose a $\ll$-increasing sequence $(x_n)_n$ in $S$ with supremum $x$.
Since
\[
f'
\ll f''
\leq \frac{m}{mk+1}\widehat{x}
=
\sup_n \frac{m}{mk+1}\widehat{x_n},
\]
there exists $n$ such that
\[
f' \leq \frac{m}{mk+1}\widehat{x_n}.
\]

Now $mx_n \ll mx$.
Applying~(3) to $mx_n \ll mx$ and $mk$, we obtain $y \in S$ such that
\[
mk\widehat{y}|_K \leq m\widehat{x}|_K,
\andSep
m\widehat{x_n}|_K \leq (mk+1)\widehat{y}|_K.
\]
Consequently,
\[
f'|_K
\leq \frac{m}{mk+1}\widehat{x_n}|_K
\leq \widehat{y}|_K
\leq \frac{1}{k}\widehat{x}|_K
\leq f|_K,
\]
as desired.
\end{proof}

%==========================================================================================
The first condition in the next result is a weakening of $(2,\omega)$-divisibility, where the comparison $2t\leq x$ is replaced by $2\widehat{t}|_K \leq \widehat{x}|_K$.
The second condition is a smoothing property.

%==========================================================================================
\begin{lma}
\label{prp:SufficientDenseK}
Let $S$ be a \CuSgp{} satisfying \axiomO{5}, and let $K\subseteq\F(S)$ be compact.
Assume that $S$ satisfies the following conditions:
\begin{enumerate}
\item
For every $x' \ll x$ in $S$, there exists $t \in S$ such that
\[
2\widehat{t}|_K
\leq \widehat{x}|_K,
\andSep
x' \leq \infty t.
\]

\item
For every $x' \ll x \leq \infty t$ in $S$ and every $k \geq 2$,
there exists $y \in S$ such that
\[
k\widehat{y}|_K
\leq \widehat{x}|_K,
\andSep
\widehat{x'}|_K
\leq k\widehat{y}|_K + k\widehat{t}|_K.
\]
\end{enumerate}
Then $S$ satisfies the equivalent statements of \autoref{prp:CharDenseK}.
\end{lma}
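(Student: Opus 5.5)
The plan is to verify condition~(3) of \autoref{prp:CharDenseK} directly. Let $x'\ll x$ in $S$ and $k\geq 2$. We look for $y$ of the form $y=y_0+t$, where:
\begin{itemize}
\item $t$ comes from iterating hypothesis~(1) and has very small rank on $K$ compared with $x$;
\item $y_0$ comes from hypothesis~(2), applied with the parameter $k+1$ instead of $k$.
\end{itemize}
The extra $+1$ buys exactly enough room to absorb $t$.

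\textbf{Step 1: a small $t$.} Fix $N$ with $2^N\geq k(k+1)$. Using that $S$ is a \CuSgp{}, choose a $\ll$-chain
\[
x'\ll x''\ll z_N\ll z_{N-1}\ll\dots\ll z_1\ll z_0:=x.
\]
Apply~(1) to $z_1\ll z_0$ to get $t_1$ with $2\widehat{t_1}|_K\leq\widehat{x}|_K$ and $z_1\leq\infty t_1$.

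Inductively, suppose $t_j$ is given with $z_j\leq\infty t_j$. Then $z_{j+1}\ll z_j\leq\infty t_j=\sup_n n t_j$. Write $t_j$ as the supremum of a $\ll$-increasing sequence and use the definition of $\ll$ to find $t_j'\ll t_j$ with $z_{j+1}\leq n t_j'$ for some $n$. Apply~(1) to $t_j'\ll t_j$ to obtain $t_{j+1}$ with
\[
2\widehat{t_{j+1}}|_K\leq\widehat{t_j}|_K
\andSep
t_j'\leq\infty t_{j+1}.
\]
It follows that $z_{j+1}\leq\infty t_{j+1}$. After $N$ steps, $t:=t_N$ satisfies
\[
2^N\widehat{t}|_K\leq\widehat{x}|_K
\andSep
x''\leq z_N\leq\infty t.
\]

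\textbf{Step 2: apply~(2).} Apply hypothesis~(2) to $x'\ll x''\leq\infty t$ with the integer $k+1\geq2$. This gives $y_0\in S$ with
\[
(k+1)\widehat{y_0}|_K\leq\widehat{x''}|_K\leq\widehat{x}|_K
\andSep
\widehat{x'}|_K\leq(k+1)\widehat{y_0}|_K+(k+1)\widehat{t}|_K.
\]

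\textbf{Step 3: combine.} Set $y:=y_0+t$. The lower bound is immediate:
\[
\widehat{x'}|_K\leq(k+1)\widehat{y}|_K.
\]
For the upper bound,
\[
k\widehat{y}|_K
\leq\tfrac{k}{k+1}\widehat{x}|_K+k\widehat{t}|_K
\leq\tfrac{k}{k+1}\widehat{x}|_K+\tfrac{k}{2^N}\widehat{x}|_K
\leq\widehat{x}|_K,
\]
because $2^N\geq k(k+1)$ gives $\tfrac{k}{2^N}\leq\tfrac{1}{k+1}$. The inequalities involving $\infty$ cause no trouble, since we only use monotone addition in $[0,\infty]$. This is condition~(3), so \autoref{prp:CharDenseK} yields the equivalent statements.

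\textbf{Main obstacle.} The delicate point is Step~1. Hypothesis~(1) only controls $t$ against $\widehat{x}$ with factor $2$, and re-applying it loses the relation $x''\leq\infty t$ unless we interpolate carefully. The fix is the $\ll$-chain $z_j$ together with the compact-containment argument $z_{j+1}\ll\infty t_j\Rightarrow z_{j+1}\leq n t_j'$ for some $t_j'\ll t_j$. I would check this step carefully; the rest is bookkeeping with the factor $\tfrac{k}{k+1}$.
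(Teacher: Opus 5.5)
Your proof is correct and follows the paper's argument essentially step for step: the paper also iterates condition~(1) by using $x'\ll\infty s$ to pass to some $s'\ll s$ before reapplying~(1). It then applies condition~(2) with $k+1$ to $x'\ll x''\leq\infty t$, sets the witness to be $y_0+t$, and uses $2^N\geq k(k+1)$ to absorb $k\widehat{t}$. The only cosmetic difference is that you fix the $\ll$-chain $x''\ll z_N\ll\dots\ll z_0=x$ in advance, whereas the paper phrases the iteration as an induction on $n$ with a fresh interpolation at each step.
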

%------------------------------------------------------------------------------------------
\begin{proof}
We first observe that condition~(1) can be iterated.
We claim that, for every $x' \ll x$ in $S$ and every $n\geq1$, there exists
$t\in S$ such that
\[
2^n\widehat{t}|_K\leq\widehat{x}|_K,
\andSep
x'\leq\infty t.
\]
We prove this by induction on $n$.
The case $n=1$ is condition~(1).

Assume that the assertion holds for some $n\geq1$.
To verify if for $n+1$, let $x' \ll x$ in $S$.
Choose $x''\in S$ such that
\[
x'\ll x''\ll x.
\]
By the induction hypothesis, applied for $x'' \ll x$, there exists $s\in S$ such that
\[
2^n\widehat{s}|_K\leq\widehat{x}|_K,
\andSep
x''\leq\infty s.
\]
Using that $x'\ll\infty s$, choose $s' \in S$ such that
\[
s' \ll s, \andSep
x'\leq \infty s'.
\]
Applying condition~(1) to $s' \ll s$, we obtain $t\in S$ such that
\[
2\widehat{t}|_K\leq\widehat{s}|_K,
\andSep
s'\leq\infty t.
\]
It follows that
\[
2^{n+1}\widehat{t}|_K
\leq2^n\widehat{s}|_K
\leq\widehat{x}|_K, \andSep
x'\leq \infty s'\leq\infty t.
\]
This proves the claim.

\medskip

We now verify condition~(3) of \autoref{prp:CharDenseK}.
Let $x'\ll x$ in $S$, and let $k\geq2$.
Choose $x''\in S$ and $n\geq1$ such that
\[
x'\ll x''\ll x, \andSep
2^n\geq k(k+1).
\]
By the claim, there exists $t\in S$ such that
\[
k(k+1)\widehat{t}|_K
\leq\widehat{x}|_K,
\andSep
x''\leq\infty t.
\]
Applying condition~(2) to $x'\ll x''\leq\infty t$ and $k+1$, we obtain
$y\in S$ such that
\[
(k+1)\widehat{y}|_K
\leq\widehat{x''}|_K
\leq\widehat{x}|_K, \andSep
\widehat{x'}|_K
\leq
(k+1)\widehat{y}|_K+(k+1)\widehat{t}|_K.
\]
Set $z:=y+t$.
Then
\[
k\widehat{z}|_K
=
k\widehat{y}|_K+k\widehat{t}|_K
\leq
\frac{k}{k+1}\widehat{x}|_K
+
\frac{1}{k+1}\widehat{x}|_K
=
\widehat{x}|_K,
\]
while
\[
\widehat{x'}|_K
\leq
(k+1)\widehat{y}|_K+(k+1)\widehat{t}|_K
=
(k+1)\widehat{z}|_K.
\]
as desired.
\end{proof}

%==========================================================================================
%==========================================================================================
\section{Density and divisibility of ranks on the trace space}
\label{sec:RanksCAlg}

%==========================================================================================
We show that Cuntz semigroups of \ca{s} automatically satisfy the smoothing condition from \autoref{prp:SufficientDenseK}.
We deduce a characterization of density of tracial ranks.

%==========================================================================================
\begin{prp}
\label{prp:SmoothingTA}
Let $A$ be a \ca{}.
Then $\Cu(A)$ satisfies the following smoothing property with respect to the cone
$\T(A)$ of lower semicontinuous traces:
For every $x' \ll x \leq \infty t$ in $\Cu(A)$ and every $k \geq 2$,
there exists $y \in \Cu(A)$ such that
\[
k\widehat{y}|_{\T(A)}
\leq \widehat{x}|_{\T(A)},
\andSep
\widehat{x'}|_{\T(A)}
\leq k\widehat{y}|_{\T(A)} + k\widehat{t}|_{\T(A)}.
\]
\end{prp}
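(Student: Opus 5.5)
The plan is to construct $y$ directly as a rank, by compressing a suitable element into $\Her(b)$ (as in \autoref{prp:CompressUnital}) and then rounding it (as in \autoref{prp:Rounding}). The key point is that \autoref{prp:Rounding} produces an error bounded by the rank of the element being rounded. If that element lies in the hereditary subalgebra of a representative of $t$, the error is at most $\widehat{t}$.

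\emph{Setup.} Write $x=[a]$ and $t=[b]$ with $a,b\in(A\otimes\Cpct)_+$. Choose $\varepsilon>0$ such that $x'\leq[(a-\varepsilon)_+]$. Since $[(a-\varepsilon/2)_+]\ll x\leq\infty t=\sup_N N t$, there is $N$ with
\[
(a-\varepsilon/2)_+\precsim b\otimes 1_N .
\]
Here we identify $b\otimes 1_N$ with $b^{\oplus N}$ in $M_N(A\otimes\Cpct)\cong A\otimes\Cpct$. Pick a continuous $g\colon[0,\infty)\to[0,1]$ that vanishes on $[0,\varepsilon/2]$ and equals $1$ on $[\varepsilon,\infty)$. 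Fix $\delta\in(0,1)$ and set $d:=(1+\delta)g(a)$. Then $d\precsim(a-\varepsilon/2)_+\precsim b\otimes1_N$. Pointwise on the spectrum of $a$ we have
\[
\charFct_{(\varepsilon,\infty)}\leq\bigl((1+\delta)g-\delta\bigr)_+\leq\charFct_{(0,\infty)}.
\]
For every $\tau\in\T(A)$ this gives
\[
d_\tau((a-\varepsilon)_+)\leq\tau((d-\delta)_+)\leq d_\tau(a).
\]

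\emph{Compression.} By R{\o}rdam's lemma there is $r$ with $(d-\delta)_+=r^*r$ and $rr^*\in\Her(b\otimes1_N)=M_N(\Her(b))$. Let $c$ be the sum of the $N$ diagonal blocks of $rr^*$. Then $c\in\Her(b)_+$ and $\tau(c)=\tau(rr^*)=\tau((d-\delta)_+)$ for every lower semicontinuous trace, since the trace of a matrix is the sum of its diagonal entries. Moreover $d_\tau(c)\leq\widehat{t}(\tau)$, because $c\precsim b$. This is the same trace-preserving compression as in \autoref{prp:CompressUnital}, but carried out with matrix units instead of a fullness relation.

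\emph{Rounding.} Apply \autoref{prp:Rounding} to $c/k$ with an integer $m\geq\max\{1,\|c\|\}$, obtaining $e$ and setting $y:=[e]$. Since $c/k\sim c$, we get
\[
\widehat{y}\leq\tfrac1k\tau(c)\leq\widehat{y}+\widehat{t}
\]
on $\T(A)$, including traces with infinite values, which \autoref{prp:Rounding} allows. Hence $k\widehat{y}\leq\tau(c)\leq\widehat{x}$. Also
\[
\widehat{x'}\leq d_\tau((a-\varepsilon)_+)\leq\tau(c)\leq k\widehat{y}+k\widehat{t}.
\]

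The main obstacle is the compression step. We need an element that lies in $\Her(b)$ itself, not merely in $M_N(\Her(b))$, so that its rank is controlled by $\widehat{t}$. At the same time its trace must be squeezed between $d_\tau((a-\varepsilon)_+)$ and $d_\tau(a)$ exactly, not just up to a constant. The diagonal-block summation handles the first requirement. The choice $d=(1+\delta)g(a)$, cut down by $\delta$, handles the second.
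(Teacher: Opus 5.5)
Your proof is correct, and its overall structure matches the paper's: you squeeze the trace of a function of $a$ between $d_\tau((a-\varepsilon)_+)$ and $d_\tau(a)$, move it trace-preservingly into $\Her(b)$, and apply \autoref{prp:Rounding} to $\tfrac{1}{k}c$. The difference is in the compression step.

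The paper uses $x\leq\infty t$ only to get $a\in J_b$, so that $g(a)\in\Ped(J_b)$; this is why its $g$ vanishes near $0$. It then quotes Blackadar's Corollary~II.5.2.11 to write $g(a)=\sum_{j=1}^N r_j^*br_j$ exactly. The element $c=\sum_j b^{1/2}r_jr_j^*b^{1/2}$ then satisfies $\tau(c)=\tau(g(a))$ with no loss, in parallel with \autoref{prp:CompressUnital}.

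You instead use compact containment, $[(a-\varepsilon/2)_+]\ll x\leq\sup_N Nt$, to get $(a-\varepsilon/2)_+\precsim b^{\oplus N}$. You then apply R{\o}rdam's lemma, which only represents a cut-down $(d-\delta)_+$. Your rescaling $d=(1+\delta)g(a)$ absorbs this loss while keeping the trace between $d_\tau((a-\varepsilon)_+)$ and $d_\tau(a)$. Writing R{\o}rdam's lemma as $(d-\delta)_+\oplus 0=s^*b^{\oplus N}s$, the first column of $s$ gives $(d-\delta)_+=\sum_i s_{i1}^*bs_{i1}$. So your diagonal-block sum is a matrix version of the paper's $c$.

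Your route uses only standard Cuntz-comparison tools and avoids Pedersen-ideal theory, at the cost of the auxiliary $\varepsilon/2$ and $\delta$. The paper's route is shorter because the Pedersen ideal supplies an exact algebraic identity.
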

%------------------------------------------------------------------------------------------
\begin{proof}
Replacing $A$ by $A\otimes\Cpct$, we may assume that $A$ is stable.
Choose $a,b\in A_+$ such that
\[
[a]=x
\andSep
[b]=t,
\]
and choose $\varepsilon>0$ such that
\[
x'\leq[(a-\varepsilon)_+].
\]
Let $g\colon[0,\infty)\to[0,1]$ be a continuous function that vanishes on a
neighborhood of zero and takes the value $1$ on $[\varepsilon,\infty)$.
Then
\[
\widehat{x'}(\tau)
\leq d_\tau((a-\varepsilon)_+)
\leq \tau(g(a))
\leq d_\tau(a)
=\widehat{x}(\tau)
\]
for every $\tau\in\T(A)$.

Let $J_b$ denote the closed ideal generated by $b$.
Since $x\leq\infty t$, we have $a\in J_b$, and therefore $g(a)\in\Ped(J_b)$.
By \cite[Corollary~II.5.2.11]{Bla06OpAlgs}, there exist $r_1,\ldots,r_N\in A$ such that
\[
g(a)=\sum_{j=1}^N r_j^*br_j.
\]
Set
\[
c := \sum_{j=1}^N b^{\frac{1}{2}}r_jr_j^*b^{\frac{1}{2}}.
\]
Then $c \in \overline{bAb}$, and hence $c \precsim b$.
Moreover, additivity and the trace identity give
\[
\tau(c)
= \sum_{j=1}^N \tau\bigl( b^{\frac{1}{2}}r_jr_j^*b^{\frac{1}{2}} \bigr)
= \sum_{j=1}^N \tau\bigl( r_j^*br_j \bigr)
=\tau(g(a))
\]
for every $\tau\in\T(A)$, also when these values are infinite.

Apply \autoref{prp:Rounding} to $\tfrac{1}{k}c$, and let $y\in\Cu(A)$ be the class
of the resulting element.
For every $\tau\in\T(A)$, we obtain
\[
\widehat{y}(\tau)
\leq \frac{1}{k}\tau(c)
\leq \widehat{y}(\tau)+d_\tau(\tfrac{1}{k}c).
\]
Since $\tfrac{1}{k}c \sim c \precsim b$, we have
\[
d_\tau(\tfrac{1}{k}c) \leq d_\tau(b)=\widehat{t}(\tau).
\]
It follows that
\[
k\widehat{y}(\tau)
\leq \tau(c)
= \tau(g(a))
\leq \widehat{x}(\tau),
\]
and
\[
\widehat{x'}(\tau)
\leq \tau(g(a))
=\tau(c)
\leq k\widehat{y}(\tau)+k\widehat{t}(\tau).
\]
This proves the result.
\end{proof}

%==========================================================================================
\begin{thm}
\label{prp:CharDenseTA}
Let $A$ be a \ca{}.
Then the following are equivalent:
\begin{enumerate}
\item
Ranks are dense on traces:
For every $f' \ll f$ in $\LL(\QT(A))$, there exists
$a \in (A\otimes\Cpct)_+$ such that
\[
f'(\tau) \leq d_\tau(a) \leq f(\tau)
\]
for every $\tau \in \T(A)$.

\item
Ranks are almost divisible on traces:
For every $a \in (A\otimes\Cpct)_+$, every $\varepsilon>0$, and every
$k \geq 2$, there exists $b \in (A\otimes\Cpct)_+$ such that
\[
kd_\tau(b) \leq d_\tau(a),
\andSep
d_\tau((a-\varepsilon)_+) \leq (k+1)d_\tau(b)
\]
for every $\tau \in \T(A)$.

\item
$A$ has the tracial Global Glimm Property:
For every $a \in (A\otimes\Cpct)_+$ and every $\varepsilon>0$,
there exists $b \in (A\otimes\Cpct)_+$ such that
\[
2d_\tau(b) \leq d_\tau(a) \text{ for every $\tau \in \T(A)$}, \andSep
[(a-\varepsilon)_+] \leq \infty[b].
\]
\end{enumerate}
\end{thm}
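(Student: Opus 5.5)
Throughout, I take $S:=\Cu(A)$ and $K$ a compact base of the cone of functionals corresponding to $\T(A)$. Since $\T(A)\subseteq\QT(A)\cong\F(\Cu(A))$ is a closed subcone of a compact cone, it is itself compact in the induced topology, so I would take $K:=\T(A)$ directly. With this choice I would match the three conditions of the theorem with those of \autoref{prp:CharDenseK} and \autoref{prp:SufficientDenseK}, using that $\Cu(A)$ satisfies \axiomO{5}.

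First, (1) and (2) are precisely conditions (1) and (3) of \autoref{prp:CharDenseK} for $S=\Cu(A)$ and $K=\T(A)$, after translating notation. In (1), $\LL(\QT(A))$ is $\LL(\F(\Cu(A)))$ and $\widehat{[a]}(\tau)=d_\tau(a)$. For (2), I pass between the two formulations as follows. Given $x'\ll x$, write $x=[a]$ and choose $\varepsilon>0$ with $x'\leq[(a-\varepsilon)_+]$; then condition (2) of the theorem yields condition (3) of \autoref{prp:CharDenseK}. Conversely, given $a$ and $\varepsilon$, apply \autoref{prp:CharDenseK}(3) to $[(a-\varepsilon)_+]\ll[a]$. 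The only subtlety is that the theorem's (2) is stated for $k\geq2$, exactly as in \autoref{prp:CharDenseK}, so nothing extra is needed there. Hence (1)$\Leftrightarrow$(2).

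Next, (3)$\Rightarrow$(1),(2). Condition (3) is condition (1) of \autoref{prp:SufficientDenseK}. Given $x'\ll x=[a]$, pick $\varepsilon$ with $x'\leq[(a-\varepsilon)_+]$ and take $t=[b]$; then $2\widehat{t}|_K\leq\widehat{x}|_K$ and $x'\leq\infty t$. Condition (2) of \autoref{prp:SufficientDenseK} is \autoref{prp:SmoothingTA}. So \autoref{prp:SufficientDenseK} gives the equivalent conditions of \autoref{prp:CharDenseK}.

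Finally, (2)$\Rightarrow$(3) is the step needing an actual argument. Given $a$ and $\varepsilon$, apply (2) with $k=2$ to $a$ and $\varepsilon/2$ (or any smaller value) to get $b$ with $2d_\tau(b)\leq d_\tau(a)$ and $d_\tau((a-\varepsilon/2)_+)\leq 3d_\tau(b)$ for all $\tau\in\T(A)$. The obstacle is to deduce the Cuntz-semigroup statement $[(a-\varepsilon)_+]\leq\infty[b]$ from merely tracial information. My plan is to show that $(a-\varepsilon)_+$ lies in the ideal $J_b$ generated by $b$. Consider the quotient map $\pi\colon A\otimes\Cpct\to (A/J_b)\otimes\Cpct$ and the lower semicontinuous trace $\tau_\infty$ on $A/J_b$ that is $\infty$ on every nonzero positive element. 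This is a trace: it is additive, and it is lower semicontinuous because the set where it vanishes, namely $\{0\}$, is closed under the relevant limits. Pulled back to $A$, it gives $\tau\in\T(A)$ with $d_\tau(b)=0$, while $d_\tau((a-\varepsilon/2)_+)=\infty$ unless $\pi((a-\varepsilon/2)_+)=0$. The inequality $d_\tau((a-\varepsilon/2)_+)\leq 3d_\tau(b)=0$ therefore forces $(a-\varepsilon/2)_+\in J_b$. Then $[(a-\varepsilon)_+]\ll[(a-\varepsilon/2)_+]\leq\infty[b]$, since elements of $J_b\otimes\Cpct$ are dominated by $\infty[b]$. The point to verify carefully is this lower semicontinuity of $\tau_\infty$ on the quotient; it is standard, since such an "infinite trace" is the supremum of the zero weight on an ideal.
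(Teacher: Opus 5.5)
Your proposal is correct and follows the paper's proof. You get (1)$\Leftrightarrow$(2) from \autoref{prp:CharDenseK} with $K=\T(A)$, and (3)$\Rightarrow$(1),(2) from \autoref{prp:SufficientDenseK} together with \autoref{prp:SmoothingTA}. For (2)$\Rightarrow$(3) you apply (2) with $k=2$ and use that lower semicontinuous traces detect closed ideals, which you make explicit through the trace that is $0$ on $J_b$ and $\infty$ elsewhere. The passage to $\varepsilon/2$ is unnecessary, since the same argument already places $(a-\varepsilon)_+$ in $J_b$, but it does no harm.
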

%------------------------------------------------------------------------------------------
\begin{proof}
The equivalence between~(1) and~(2) follows from \autoref{prp:CharDenseK}, applied to $S=\Cu(A)$ and $K=\T(A)\subseteq\QT(A)=\F(\Cu(A))$.

We show that~(2) implies~(3).
Let $a\in(A\otimes\Cpct)_+$ and $\varepsilon>0$.
Applying~(2) with $k=2$, we obtain $b\in(A\otimes\Cpct)_+$ such that
\[
2d_\tau(b)\leq d_\tau(a),
\andSep
d_\tau((a-\varepsilon)_+)\leq3d_\tau(b)
\]
for every $\tau\in\T(A)$.
Since lower-semicontinuous traces detect closed ideals, it follows that $[(a-\varepsilon)_+]\leq\infty[b]$, as desired.

Finally, assume~(3).
We verify the hypotheses of \autoref{prp:SufficientDenseK} for $S=\Cu(A)$ and $K=\T(A)$.
The first condition of \autoref{prp:SufficientDenseK} holds by~(3).
The second condition is \autoref{prp:SmoothingTA}.
Therefore, \autoref{prp:SufficientDenseK} shows that~(1) and~(2) hold.
\end{proof}

%==========================================================================================
A \ca{} $A$ has the \emph{Global Glimm Property} if, for every $a\in A_+$, $n\geq2$, and $\varepsilon>0$, there is a $*$-homomorphism
\[
\varphi\colon C_0((0,1])\otimes M_n\longrightarrow\Her(a)
\]
such that $(a-\varepsilon)_+$ belongs to the closed ideal generated by the image of $\varphi$.
By \cite[Theorem~3.6]{ThiVil23Glimm}, this is equivalent to $(2,\omega)$-divisibility of $\Cu(A)$, and also to $(n,\omega)$-divisibility for every $n\geq2$.
Here $(n,\omega)$-divisibility means that, for every $x'\ll x$, there is $y$ with
\[
ny\leq x
\andSep
x'\leq\infty y:=\sup_{j\geq1}jy.
\]
The Global Glimm Property passes to stabilization by \cite[Proposition~3.12]{ThiVil23Glimm}.

%==========================================================================================
\begin{cor}
\label{prp:GGP}
Let $A$ be a \ca{} with the Global Glimm Property.
Then ranks are dense and almost divisible on traces in the sense of
\autoref{prp:CharDenseTA}.
\end{cor}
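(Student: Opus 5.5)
The plan is to verify condition~(3) of \autoref{prp:CharDenseTA}, the tracial Global Glimm Property, and then invoke that theorem to obtain density and almost divisibility of ranks on traces. By \cite[Proposition~3.12]{ThiVil23Glimm}, $A\otimes\Cpct$ again has the Global Glimm Property. By \cite[Theorem~3.6]{ThiVil23Glimm}, $\Cu(A)=\Cu(A\otimes\Cpct)$ is therefore $(2,\omega)$-divisible.

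Now fix $a\in(A\otimes\Cpct)_+$ and $\varepsilon>0$, and set $x:=[a]$ and $x':=[(a-\varepsilon)_+]$, so that $x'\ll x$. By $(2,\omega)$-divisibility there is $y\in\Cu(A)$ with
\[
2y\leq x \andSep x'\leq\infty y.
\]
Choose a representative $b\in(A\otimes\Cpct)_+$ with $[b]=y$. The inequality $2y\leq x$ means $b\oplus b\precsim a$. Since ranks are monotone under Cuntz subequivalence and additive on direct sums, this gives $2d_\tau(b)\leq d_\tau(a)$ for every $\tau\in\T(A)$. Together with $[(a-\varepsilon)_+]\leq\infty[b]$, this is exactly condition~(3) of \autoref{prp:CharDenseTA}.

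Applying \autoref{prp:CharDenseTA} now yields conditions~(1) and~(2), which are density and almost divisibility of ranks on traces. I expect no real obstacle here. The only point to check is that the cited $(2,\omega)$-divisibility is indeed stated in $\Cu(A)$, whose elements are classes of elements of $A\otimes\Cpct$, so that the comparison $2y\leq x$ translates directly into the rank inequality over all of $\T(A)$, including traces that take infinite values.
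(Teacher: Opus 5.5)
Your proposal is correct and matches the paper, whose proof is just the remark that the Global Glimm Property implies its tracial version, condition~(3) of \autoref{prp:CharDenseTA}. You spell this out through $(2,\omega)$-divisibility of $\Cu(A)$ and the monotonicity and additivity of $d_\tau$; passing to $A\otimes\Cpct$ is harmless but unnecessary, since the cited equivalence already applies to $\Cu(A)$ directly.
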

%------------------------------------------------------------------------------------------
\begin{proof}
The Global Glimm Property implies its tracial version.
\end{proof}

%==========================================================================================
\begin{thm}
\label{prp:GGPStrComp-Pure}
Every \ca{} with the Global Glimm Property and strict comparison by traces is pure.
\end{thm}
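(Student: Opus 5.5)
Strict comparison by traces already gives strict comparison, and hence almost unperforation of $\Cu(A)$. So the task is to prove almost divisibility of $\Cu(A)$. The plan is to take almost divisibility of ranks on traces from \autoref{prp:GGP} (via \autoref{prp:CharDenseTA}(2)) and upgrade it to almost divisibility in $\Cu(A)$ using strict comparison by traces. This follows the pattern of the proof of \autoref{prp:PureSimpleUnital}, but works directly on the cone $\T(A)$ rather than on $T_1(A)$.

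Fix $x'\ll x$ in $\Cu(A)$ and $k\geq1$; if $x'=0$ take $y=0$. Choose $a\in(A\otimes\Cpct)_+$ with $[a]=x$ and $\varepsilon>0$ with $x'\leq[(a-\varepsilon)_+]$. Then interpolate $\varepsilon>\varepsilon_1>\varepsilon_2>0$, so that
\[
x'\leq[(a-\varepsilon)_+]\ll[(a-\varepsilon_1)_+]\ll[(a-\varepsilon_2)_+]\ll x.
\]
Strict comparison only accepts rank inequalities with a factor $\gamma<1$. To get that room, use condition~(2) of \autoref{prp:CharDenseK} for $S=\Cu(A)$ and $K=\T(A)$; it is equivalent to \autoref{prp:CharDenseTA}(2) and hence holds by \autoref{prp:GGP}. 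Apply it to $[(a-\varepsilon_1)_+]\ll[(a-\varepsilon_2)_+]$ with $k+1$ in place of $k$ (so $k+1\geq2$). This gives $\delta\in(0,1)$ and $y\in\Cu(A)$ with
\[
k\widehat{y}\leq(k+1)\widehat{y}\leq(1-\delta)\widehat{[(a-\varepsilon_2)_+]}\leq(1-\delta)\widehat{x}
\]
and
\[
\widehat{[(a-\varepsilon_1)_+]}\leq(1-\delta)(k+2)\widehat{y}
\]
on $\T(A)$.

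The first inequality gives $k\widehat{y}\leq\gamma\widehat{x}$ with $\gamma=1-\delta<1$, so strict comparison by traces yields $ky\leq x$. The second inequality has the wrong multiplier: it bounds $x'$ by $(k+2)y$, not $(k+1)y$. I expect this bookkeeping to be the main obstacle, and I would fix it by choosing the parameters more carefully. Go back to \autoref{prp:CharDenseK}(1) (density of ranks) and pick $y$ with
\[
\tfrac{1}{k+1/2}\,\widehat{[(a-\varepsilon_1)_+]}\ \leq\ \widehat{y}\ \leq\ \tfrac{1}{k+1/4}\,\widehat{[(a-\varepsilon_2)_+]}.
\]
This is possible because
\[
\tfrac{1}{k+1/2}\widehat{[(a-\varepsilon_1)_+]}\ll\tfrac{1}{k+1/4}\widehat{[(a-\varepsilon_2)_+]}
\]
in $\LL(\F(\Cu(A)))$, by \cite[Lemma~2.2.5]{Rob13Cone}, exactly as in the proof that (1) implies (2) in \autoref{prp:CharDenseK}. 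Then on $\T(A)$ we have $k\widehat{y}\leq\frac{k}{k+1/4}\widehat{x}$. We also have $\widehat{(a-\varepsilon)_+}\leq\widehat{(a-\varepsilon_1)_+}\leq\frac{k+1/2}{k+1}(k+1)\widehat{y}$. Both factors are strictly less than $1$.

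Applying strict comparison by traces to each inequality (at the level of representatives, since the inequalities hold for every $\tau\in\T(A)$) gives $ky\leq[a]=x$ and $x'\leq[(a-\varepsilon)_+]\leq(k+1)y$. This is almost divisibility, so $A$ is pure. One point to check is that the functions compared are the full functions on $\T(A)$, including infinite values and the traces $\tau_\infty$ on ideals. This is handled because \autoref{prp:CharDenseK} and \autoref{prp:CharDenseTA} are stated on all of $\T(A)$, and the inequality $\widehat{x'}\leq c\,\widehat{y}$ with $c<1$ forces the ideal containment needed.
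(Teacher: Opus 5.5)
Your proposal is correct and is essentially the paper's argument. The paper applies condition~(2) of \autoref{prp:CharDenseK} on $K=\T(A)$ (available via \autoref{prp:GGP}) directly to $x'\ll x$ with $k\geq2$; it derives that condition from density of ranks by choosing $y$ between two scalings with a strict gap, exactly as you do in your final version, and then finishes with strict comparison by traces. Your $(k+2)$ ``obstacle'' came only from the unnecessary shift $k\mapsto k+1$: for $k\geq2$ condition~(2) applies with $k$ itself, and $k=1$ is trivial with $y=x$.
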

%------------------------------------------------------------------------------------------
\begin{proof}
Strict comparison by traces implies strict comparison, and hence $\Cu(A)$ is
almost unperforated.

It remains to show almost divisibility.
Let $x'\ll x$ in $\Cu(A)$, and let $k\geq2$.
By the Global Glimm Property and \autoref{prp:GGP}, ranks are almost divisible on traces.
Applying \autoref{prp:CharDenseK} with $S=\Cu(A)$ and $K=\T(A)$, we obtain $\varepsilon>0$ and $y\in\Cu(A)$ such that
\[
k\widehat{y}|_{\T(A)}
\leq (1-\varepsilon)\widehat{x}|_{\T(A)}, \andSep
\widehat{x'}|_{\T(A)}
\leq
(1-\varepsilon)(k+1)\widehat{y}|_{\T(A)}.
\]
Strict comparison by traces therefore gives
\[
ky\leq x
\andSep
x'\leq(k+1)y,
\]
as desired.
Hence $A$ is pure.
\end{proof}

%==========================================================================================
\begin{cor}
\label{prp:CharExactPure}
Let $A$ be an exact \ca{}.
Then $A$ is pure if and only if it has strict comparison and the Global Glimm Property.
\end{cor}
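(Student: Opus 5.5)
The equivalence has two directions, and the plan is to reduce both to results already stated, using exactness to identify quasitraces with traces.

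For the reverse implication, assume $A$ is exact, has strict comparison, and has the Global Glimm Property. Since $A$ is exact, every lower semicontinuous $2$-quasitrace on $A$ is a trace (by \cite{Haa14Quasitraces} and the nonunital discussion in \cite[Section~2]{TikTom15CuSgpNonunital}), so $\QT(A)=\T(A)$. By the characterization of Ng--Robert recalled above, strict comparison together with $\QT(A)=\T(A)$ is exactly strict comparison by traces. Then \autoref{prp:GGPStrComp-Pure} gives that $A$ is pure.

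For the forward implication, assume $A$ is pure. Almost unperforation of $\Cu(A)$ is equivalent to strict comparison by \cite[Proposition~4.2]{EllRobSan11Cone}, so that part is immediate. The remaining point is to extract the Global Glimm Property from almost divisibility. By \cite[Theorem~3.6]{ThiVil23Glimm} it suffices to show that $\Cu(A)$ is $(2,\omega)$-divisible. Given $x'\ll x$, I choose $x''$ with $x'\ll x''\ll x$ and apply almost divisibility with $k=2$ to $x''\ll x$. This yields $y$ with $2y\leq x$ and $x''\leq 3y$. Then $x'\leq x''\leq 3y\leq\infty y$, which is exactly the $(2,\omega)$-divisibility condition. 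Thus $A$ has the Global Glimm Property. Note that this direction does not use exactness.

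The main obstacle is the forward direction, because it relies on the external equivalence between the Global Glimm Property and $(2,\omega)$-divisibility. That equivalence is quoted in the paper, so invoking it carefully should settle the step. Everything else is bookkeeping between strict comparison and strict comparison by traces, which exactness makes coincide.
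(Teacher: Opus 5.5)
Your proposal is correct and follows the paper's proof. The reverse direction uses exactness to get $\QT(A)=\T(A)$, hence strict comparison by traces, and then \autoref{prp:GGPStrComp-Pure}, exactly as the paper does. The paper only asserts that the forward direction holds in general, and your derivation is a valid way to fill it in: almost divisibility with $k=2$ gives $(2,\omega)$-divisibility, which yields the Global Glimm Property by the Thiel--Vilalta equivalence. The auxiliary $x''$ is unnecessary, since applying almost divisibility directly to $x'\ll x$ already gives $2y\leq x$ and $x'\leq 3y\leq\infty y$.
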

\begin{proof}
The forward implication holds in general:
every pure \ca{} has strict comparison and the Global Glimm Property.

Conversely, assume that $A$ has strict comparison and the Global Glimm Property.
Since $A$ is exact, every lower semicontinuous $2$-quasitrace on $A$ is a trace.
Thus strict comparison agrees with strict comparison by traces.
Hence $A$ is pure by \autoref{prp:GGPStrComp-Pure}.
\end{proof}

%==========================================================================================
%==========================================================================================
%\bibliographystyle{../../aomalphaMyShort}
%\bibliography{../../References}

\providecommand{\etalchar}[1]{$^{#1}$}
\providecommand{\bysame}{\leavevmode\hbox to3em{\hrulefill}\thinspace}
\providecommand{\noopsort}[1]{}
\providecommand{\mr}[1]{\href{http://www.ams.org/mathscinet-getitem?mr=#1}{MR~#1}}
\providecommand{\zbl}[1]{\href{http://www.zentralblatt-math.org/zmath/en/search/?q=an:#1}{Zbl~#1}}
\providecommand{\jfm}[1]{\href{http://www.emis.de/cgi-bin/JFM-item?#1}{JFM~#1}}
\providecommand{\arxiv}[1]{\href{http://www.arxiv.org/abs/#1}{arXiv~#1}}
\providecommand{\doi}[1]{\url{http://dx.doi.org/#1}}
\providecommand{\MR}{\relax\ifhmode\unskip\space\fi MR }
% \MRhref is called by the amsart/book/proc definition of \MR.
\providecommand{\MRhref}[2]{%
  \href{http://www.ams.org/mathscinet-getitem?mr=#1}{#2}
}
\providecommand{\href}[2]{#2}

\end{document}